\newcommand{\R}{\mathbb{R}}
\newtheorem{theorem}{Theorem}[section]
\newtheorem{lemma}[theorem]{Lemma}
\newtheorem{proposition}[theorem]{Proposition}
\newtheorem{corollary}[theorem]{Corollary}
\newtheorem{remark}[theorem]{Remark}
\newtheorem{question}[theorem]{Question}
\newtheorem{conjecture}{Conjecture}
\newtheorem*{remark*}{Remark}
\numberwithin{equation}{section}
\numberwithin{figure}{section}
\newcommand{\field}[1]{\mathbb{#1}}
\newcommand{\C}{\field{C}}
\DeclareMathOperator{\sech}{sech}
\def\intave#1{\int_{#1}\hbox{\llap{$\raise2.3pt\hbox{\vrule
height.9pt width7pt}\phantom{\scriptstyle{#1}}\mkern-2mu$}}}
\begin{document}
\title{Rigidity results on Liouville equation}

\author{Alexandre Eremenko}
\address{Alexandre Eremenko, Purdue University, West Lafayette IN, USA.}
\email{eremenko@math.purdue.edu}

\author{Changfeng Gui}
\address{Changfeng Gui, The University of Texas at San Antonio, TX, USA.
}
\email{changfeng.gui@utsa.edu}

\author{Qinfeng Li}
\address{Qinfeng Li, Hunan University, Changsha, Hunan, China.
}
\email{liqinfeng1989@gmail.com}

\author{Lu Xu}
\address{Lu Xu, Hunan University, Changsha, Hunan, China.
}
\email{xulu@hnu.edu.cn}

\maketitle
\begin{abstract}
We give a complete classification of solutions bounded from above
of the Liouville equation
$$-\Delta u=e^{2u}\quad\mbox{in}\quad \R^2.$$
More generally, solutions in the class
$$N:=\{ u:\limsup_{z\to\infty} u(z)/\log|z|:=k(u)<\infty\}$$
are described.
As a corollary, we obtain five rigidity results.
First, $k(u)$ can take only a discrete set of values: either $k=-2$,
or $2k$ is a non-negative integer. Second, $u\to-\infty$ as $z\to\infty$, if and only if $u$
is radial about some point. Third, if $u$ is symmetric with respect to $x$
and $y$ axes and $u_x<0,\, u_y<0$ in the first quadrant, then $u$ is radially symmetric. Fourth, if $u$ is concave and bounded from above, then $u$ is one-dimensional. Fifth, if $u$ is bounded from above, and the diameter of $\R^2$ with the metric
$e^{2u}\delta$ is $\pi$, where $\delta$ is the Euclidean metric, then $u$ is either
radial about a point or one-dimensional.

In addition we also extend the concavity rigidity result on Liouville equation in higher dimensions.

\end{abstract}

\section{Introduction}
\subsection{Motivating questions}
This paper is motivated by four questions on rigidity of solutions to the following Liouville equation in the plane.
\begin{align}
 \label{liouvilleequation}
    -\Delta u=e^{2u},\quad \mbox{in $\mathbb{R}^2$.}
\end{align}
\begin{question}
\label{q1}
Let $u$ be a solution to \eqref{liouvilleequation}. If $\lim_{|z|\rightarrow \infty}u(z)=-\infty$, does it follow that $u$ is radial about a point?
\end{question}
\begin{question}
\label{q1'}
Let $u$ be a solution to \eqref{liouvilleequation} such
that $u$ is symmetric about both the $x$-axis and the $y$-axis, and
satisfies
$u_x<0$ and $u_y<0$ in the first quadrant. Must $u$ be radial?
\end{question}
\begin{question}
\label{q2}
If $u$ is a concave solution to \eqref{liouvilleequation}, are
all level sets of $u$ flat?
\end{question}
\begin{question}
\label{q3}
Let $u$ be a solution to \eqref{liouvilleequation} and $\delta$ be the Euclidean metric in $\mathbb{R}^2$. If $\mathbb{R}^2$ has diameter $\pi$ under the metric $e^{2u}\delta$, is it true that $u$ must either be radial about a point, or $u$ is one-dimensional?
\end{question}

Note that solutions which are radial about a point satisfy the condition
in Question \ref{q1}, and hence it is a natural question to
ask whether the converse statement is true. Unfortunately,
the moving plane method in Chen-Li
\cite{CL} cannot be implemented since it is not known how fast $u$ decays
to $-\infty$.

Question \ref{q1'} was proposed in order to understand the global bifurcation
related to mean field equations on flat tori. The difficulty is
that the assumptions of this question do not imply finiteness of the
area $\int_{\R^2} e^{2u}$ and thus the result of Chen-Li \cite{CL}
cannot be used.

Question \ref{q2} was proposed in \cite{LX2021} by the third and fourth author.
The motivation comes from connection between the Liouville equation and the De Giorgi conjecture on phase transitions. A one dimensional solution to \eqref{liouvilleequation} looks like $u(x,y)=\ln(\sech y)$ up to translation, scaling and rotation. Surprisingly, $$\frac{\partial u}{\partial y}=-\tanh y,$$ which up to scaling is the one-dimensional solution to the classical Allen-Cahn equation
       \begin{align*}
           \Delta u=u^3-u.
       \end{align*}
The De Giorgi conjecture says that if along a direction,
the directional derivative of a solution to the Allen-Cahn equation is positive,
then level sets of the solution must be flat when the dimension of
the space is less than or equal to $8$.
A natural analogous assumption in the case of Liouville equation
is the sign constancy of the Hessian, from which the question arises.
Motivated by this question, in \cite{LX2021}, we developed a stronger
constant rank theorem which is useful to study concave solutions,
but we still could not solve Question \ref{q2}.

Question \ref{q3} was proposed in \cite{GL} by the second and third author,
and is important in the field of metric geometry.
Solving \eqref{liouvilleequation} means finding a conformal
factor $e^{2u}$ such that for the metric $e^{2u}\delta$,
where $\delta$ is the standard metric in $\mathbb{R}^2$,
the Gaussian curvature is equal to $1$ everywhere.
Usually, this metric is incomplete, and hence classical comparison
results in Riemannian geometry cannot apply.
In \cite{GL}, it is shown that in general, the conformal diameter is
at least $\pi$, and thus it is interesting to find out
when the lower bound $\pi$ is achieved. This motivates Question \ref{q3}.

In this paper we solve questions \ref{q1}-\ref{q1'}, and give partial solutions of questions \ref{q2}-\ref{q3}.

\medskip

\subsection{Main results}
Before stating our results, we first recall the connection between the Liouville equation and meromorphic function theory.

The Liouville equation \eqref{liouvilleequation} in the complex plane $\C$ describes a metric of constant curvature $1$
which is conformal to the standard Euclidean metric. The length
element of this metric is $e^{u(z)}|dz|$. Since all metrics of constant curvature $1$ are locally the same,
we have a developing map $f:\mathbb{C}\to\bar{\mathbb{C}}$, where $\bar{\mathbb{C}}$ is the Riemannian
sphere with the standard metric whose length element is
$$\frac{2|dz|}{1+|z|^2},$$
and $f$ is a local isometry. So $f$ can be considered as a
{\em locally univalent} meromorphic\footnote{We reserve this name for functions
meromorphic in the complex plane $\C$.} function ($f'(z)\neq 0,\; z\in\C$ and
all poles are simple). The relation between $u$ and $f$ is
the following:
\begin{equation} \label{complexrepresentation}
u(z)=\log\frac{2|f'(z)|}{1+|f(z)|^2}.
\end{equation}
So each solution $u$ of \eqref{liouvilleequation} has this representation, and conversely
every locally univalent meromorphic function $f$ defines a solution
of \eqref{liouvilleequation}. Two meromorphic functions $f_1$ and $f_2$ define the same
$u$ if and only if $f_1=\phi\circ f_2$, where $\phi$ is a rotation
of the sphere:
$$\phi(z)=\frac{pz-\bar{q}}{qz+\overline{p}},\quad p,q \in \mathbb{C}, \quad |p|^2+|q|^2=1.$$

\begin{remark}
\label{ts}
If $u(z)$ is a solution to \eqref{liouvilleequation}, then for any $z_0\in \mathbb{C}$, $\lambda \in \mathbb{C}\setminus \{0\}$,\begin{align*}
    \ln |\lambda|+u(\lambda z+z_0)
\end{align*}
is also a solution, which corresponds to the developing map $f(\lambda z+z_0)$.
\end{remark}

\subsubsection{Results in two dimensions}

Instead of dealing with questions \ref{q1}-\ref{q3} case by case we
give a complete classification of solutions bounded from above, which
will imply a complete solution of questions \ref{q1}-\ref{q1'},
and solution of questions \ref{q2}-\ref{q3} in this class.

The classification result is as follows.

\begin{theorem}
\label{upb}
Let $u$ be a solution to the Liouville equation \eqref{liouvilleequation}. Then $u$ is bounded from above, if and only if either
\begin{align}
\label{cr}
    u(x,y)=\ln \left(\frac{2}{1+x^2+y^2}\right),
\end{align}or
\begin{align}
\label{u_t}
u(x,y)=\ln\left(\frac{2e^x}{1+t^2+2te^x \cos y+e^{2x}}\right)
\quad\mbox{for some}\quad t\geq 0,
\end{align} up to a transformation in Remark \ref{ts}.
\end{theorem}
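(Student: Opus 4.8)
The plan is to translate the problem into the language of the developing map and then isolate a holomorphic invariant that is forced to be constant. By \eqref{complexrepresentation}, every solution $u$ is determined, up to rotations of the sphere, by a locally univalent meromorphic function $f$ through $u=\log\bigl(2|f'|/(1+|f|^2)\bigr)$, and $u$ is bounded from above precisely when the spherical derivative $f^\#:=|f'|/(1+|f|^2)=e^{u}/2$ is bounded on $\C$. First I would record the key identity: writing $\Delta=4\partial_z\partial_{\bar z}$ and differentiating \eqref{complexrepresentation}, a direct computation gives
\begin{equation*}
\phi:=u_{zz}-u_z^2=\tfrac12 S(f),
\end{equation*}
where $S(f)=(f''/f')'-\tfrac12(f''/f')^2$ is the Schwarzian derivative. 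Using $-4u_{z\bar z}=e^{2u}$ one checks $\partial_{\bar z}\phi=0$, so $\phi$ is entire; since $f$ is locally univalent with only simple poles, $S(f)$ is entire as well, consistently. Thus the whole problem reduces to describing the entire function $\phi$.

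The heart of the proof is to show that $\phi$ is constant. My primary route is through a priori estimates. The first step is a gradient estimate for the Liouville equation, which requires only an upper bound on $u$: if $-\Delta u=e^{2u}$ in $\C$ and $u\le M$, then $|\nabla u|\le C(M)$ everywhere, obtained by a Bernstein-type argument applied to $|\nabla u|^2$ using the one-sided bound (the lower bound is not needed). Granting this, on each unit disk the normalized function $u-u(z_0)$ has $C^0$ and gradient bounds independent of $z_0$ while $\Delta u=-e^{2u}$ is uniformly Lipschitz, so interior Schauder estimates give $|\nabla^2 u(z_0)|\le C$ uniformly; hence $|\phi|\le|u_{zz}|+|u_z|^2\le C$, and a bounded entire function is constant, say $\phi\equiv c$. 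Alternatively one can argue by value distribution: $f^\#$ bounded forces $T(r,f)=O(r^2)$ via the Ahlfors--Shimizu characteristic, so $f$ has order at most $2$; writing $f=w_1/w_2$ with $w_1,w_2$ solutions of $w''+\phi w=0$, finiteness of order forces $\phi$ to be a polynomial of degree at most $2$, and a WKB analysis shows that a nonconstant polynomial Schwarzian makes $f^\#$ unbounded along anti-Stokes directions. I expect this constancy step to be the main obstacle: proving the global gradient estimate from an upper bound alone, or equivalently excluding nonconstant polynomial Schwarzians, is where the real work lies.

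It remains to integrate $\phi\equiv c$ and normalize. If $c=0$ then $S(f)=0$, so $f$ is a Möbius transformation; then $e^{2u}|dz|^2$ is the pull-back of the round metric by a Möbius map, which is again a spherical metric and reduces to \eqref{cr} after a transformation from Remark \ref{ts}. If $c\neq0$ then $S(f)=2c$ gives $f=M(e^{\beta z})$ for some Möbius $M$ and $\beta\neq0$; rescaling $z$ as in Remark \ref{ts} I would normalize $\beta=1$. A short computation shows that for $\zeta=e^{z}$ and $M=\left(\begin{smallmatrix} a & b \\ c & d \end{smallmatrix}\right)$ with $ad-bc=1$ one has $f^\#=|\zeta|\bigl(|a\zeta+b|^2+|c\zeta+d|^2\bigr)^{-1}$, which tends to $0$ as $\zeta\to0$ and $\zeta\to\infty$ and is therefore bounded for every $M$; thus each such $f$ does yield a solution bounded from above. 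Finally $u$ depends on $M$ only modulo left multiplication by rotations (which fix $u$) and right multiplication by the stabilizer of $\{0,\infty\}$ (the $z$-translations and the reflection $z\mapsto-z$ of Remark \ref{ts}); a dimension count gives that this double coset is one-real-dimensional, and choosing representatives $M_t(\zeta)=\zeta+t$, i.e. $f=e^{z}+t$ with $t\ge0$, produces exactly \eqref{u_t}. The nonnegativity of $t$ follows by using the rotation $\zeta\mapsto e^{i\theta}\zeta$ to make the translation real and nonnegative.
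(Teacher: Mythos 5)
Your proposal is really two proposals, and they fare very differently. The route you label ``alternative'' --- bounded $f^\#$ implies $A(r,f)=O(r^2)$, hence $T(r,f)=O(r^2)$ and $f$ has finite order; Nevanlinna's theorem on locally univalent meromorphic functions of finite order then makes the Schwarzian a polynomial $2P$ with $f=w_1/w_2$ for $w''+Pw=0$; and asymptotic integration (Sibuya) shows $u$ grows like $(\deg P/2)\log|z|$ along the Stokes directions, incompatible with an upper bound unless $\deg P=0$ --- is exactly the paper's proof: it is Theorem \ref{e1} (proved via \cite{Nev1}, \cite{Book}, \cite{S}) specialized to the classes $N(-2)$ and $N(0)$. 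Your endgame from a constant Schwarzian is also essentially the paper's: $c=0$ gives a M\"obius $f$ and \eqref{cr}; $c\neq 0$ gives $f=M(e^{\beta z})$, normalized to $f=e^z+t$, $t\ge 0$, i.e.\ \eqref{u_t}. Two remarks here: your dimension count of the double coset is only a heuristic and does not by itself show the $M_t$ exhaust all cosets --- what is needed is the explicit reduction the paper performs (post-compose with a rotation killing the constant term of the denominator, Case 3, then pre-compose and rotate as in Case 1, which you do sketch); on the other hand, your verification that every $M(e^{z})$ has bounded spherical derivative is a nice touch, supplying explicitly the ``if'' direction that the paper leaves implicit.

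The route you label ``primary'' has a genuine gap at precisely its first step, and you half-concede this yourself. The global estimate ``$u\le M$ and $-\Delta u=e^{2u}$ imply $|\nabla u|\le C(M)$'' is true only a posteriori, as a consequence of the classification being proved, and the Bernstein argument you invoke does not close with the natural quantities: using $\Delta u=-e^{2u}$ one computes
\begin{equation*}
\Delta\bigl(|\nabla u|^2\bigr)=2|D^2u|^2-4e^{2u}|\nabla u|^2,
\qquad
\Delta\bigl(|\nabla u|^2+e^{2u}\bigr)=16|u_{zz}|^2-e^{4u},
\end{equation*}
both of indefinite sign, so neither $|\nabla u|^2$ nor the ``first integral'' $|\nabla u|^2+e^{2u}$ (which is constant exactly on the one-dimensional solutions) is subharmonic. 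The one-sided bound gives no control from below; the standard decomposition of $u$ into a harmonic part plus a controlled Newtonian potential, followed by Harnack for the harmonic part (which is bounded above), yields only $|\nabla u|\le C(M)\,(1+M-u)$, which is useless in the regions where $u\to-\infty$ --- and your Schauder step explicitly presupposes the uniform gradient bound to control the oscillation of $u$ on unit disks. Note also that any correct estimate must degenerate exponentially: for the family \eqref{u_t} one has $\sup u_t=\ln t+O(1)$ while $|\partial_y u_t|$ is of order $t$ near the point $e^x=t$, $y=\pi$, so $C(M)\gtrsim e^{M}$; a soft, scale-blind argument cannot produce such a bound. As written, then, your proposal is complete only through the value-distribution route, where it coincides with the paper's argument; the purely PDE route remains an (interesting) open alternative, consistent with the paper's remark that a pure PDE proof of these two-dimensional rigidity results seems difficult.
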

As a corollary of Theorem \ref{upb}, we summarize our results on the motivating questions stated in the beginning of the paper.
\begin{theorem}
\label{answer}
Let $u$ be a solution to the Liouville equation \eqref{liouvilleequation}. Then we have the following:
\begin{enumerate}
\item $\lim_{|z|\rightarrow \infty} u(z)=-\infty$, if and only if $u$ is radial about a point.
\item If $u$ is symmetric with respect to both $x$-axis and $y$-axis and $u_x<0,\, u_y<0$ in the first quadrant, then $u$ is radially symmetric.
\item If $u$ is concave and bounded from above, then up to
a transformation in Remark \ref{ts}, $u(z)=\ln(\sech(y))$.
\item If $u$ is bounded from above, then under the metric $e^{2u}\delta$,
the diameter of $\mathbb{R}^2$ belongs to $[\pi,2\pi)$
and can take any value in this interval. Moreover,
the diameter is equal to $\pi$, if and only if either $u$
is radial about some point, or $u$ is a one-dimensional solution.
\end{enumerate}
\end{theorem}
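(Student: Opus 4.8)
The unifying strategy is that, by Theorem~\ref{upb}, every solution we must analyze is, after a transformation of Remark~\ref{ts}, one of the two explicit normal forms \eqref{cr} and \eqref{u_t}. Since the transformations $u\mapsto \ln|\lambda|+u(\lambda z+z_0)$ are isometries of the conformal metric $e^{2u}\delta$ (via the affine change $w=\lambda z+z_0$) and are similarities of $\mathbb{R}^{2}$, they preserve radiality, one-dimensionality, concavity and the conformal diameter; hence each assertion reduces to a direct check on \eqref{cr} and \eqref{u_t}. It is convenient to record the developing maps: \eqref{cr} corresponds to $f(z)=z$, while a short computation using $1+|e^{z}+t|^{2}=1+t^{2}+2te^{x}\cos y+e^{2x}$ shows that \eqref{u_t} corresponds to $f(z)=e^{z}+t$. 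In particular \eqref{cr} is radial, for $t=0$ the solution $u_{0}=\ln\sech x$ is one-dimensional, and for $t>0$ the solution \eqref{u_t} is $2\pi$-periodic in $y$.

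For (1), the implication ``radial $\Rightarrow u\to-\infty$'' is immediate from \eqref{cr}. Conversely, if $u\to-\infty$ at infinity then $u$ is bounded above, so Theorem~\ref{upb} applies, and it remains to exclude \eqref{u_t}: along the line $x=0$ one has $u(0,y)=\ln\frac{2}{2+t^{2}+2t\cos y}$, which is bounded below, so $u\not\to-\infty$. For (2), the monotonicity and symmetry force the origin to be a maximum, so $u\le u(0,0)$ is bounded above and Theorem~\ref{upb} applies; it then suffices to rule out the transformed forms of \eqref{u_t}. Each of these is invariant under a nontrivial translation (periodic for $t>0$, translation-invariant for $t=0$) along some direction $v=(a,b)$; using the two reflection symmetries of $u$ one may arrange $a,b\ge 0$, whereupon $a\,u_{x}+b\,u_{y}<0$ in the open first quadrant contradicts invariance along $v$. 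Hence $u$ is of type \eqref{cr}, i.e.\ radial. For (3), with $u$ bounded above Theorem~\ref{upb} again applies, and one tests concavity: \eqref{cr} is not concave since $\partial_{xx}u>0$ for $|x|$ large; for $t>0$, the $2\pi$-periodicity in $y$ makes the restriction of \eqref{u_t} to a vertical line a periodic concave function, hence constant, contradicting the genuine $y$-dependence; only $u_{0}=\ln\sech x$ and its transforms survive, all one-dimensional.

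Part (4) is the main obstacle, as it requires computing the conformal diameter exactly. Here I would use the developing map to realize $(\mathbb{R}^{2},e^{2u}\delta)$ as a locally isometric cover of the round sphere $\mathbb{S}^{2}$ (length element $2|dw|/(1+|w|^{2})$, diameter $\pi$); the lower bound $\mathrm{diam}\ge\pi$ is the result of \cite{GL}. For \eqref{cr}, $f=z$ is a global isometry onto the once-punctured sphere, whose intrinsic diameter is $\pi$. For $t>0$, $f=e^{z}+t$ omits exactly the two values $t$ and $\infty$, which lie at spherical distance $\rho=2\arctan(1/t)\in(0,\pi)$, and $(\mathbb{R}^{2},e^{2u}\delta)$ is the infinite cyclic cover of $\mathbb{S}^{2}\setminus\{t,\infty\}$; the one-dimensional case $u_{0}$ is the limit $t\to0$, $\rho\to\pi$ (consistent with the direct computation $\int_{\mathbb{R}}\sech x\,dx=\pi$).

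The crux is to show that, for $t>0$, this cover has diameter exactly $2\pi-\rho=\pi+2\arctan t$. The plan is to locate the farthest pair as two lifts, on adjacent sheets, of the sphere-point $w^{*}$ equidistant from and farthest from both omitted values (the antipode of the midpoint of the minor arc joining $t$ and $\infty$). Any path joining such lifts projects to a loop based at $w^{*}$ separating the two omitted points, and the heart of the matter is the sharp lower bound that every such loop has length at least $2\pi-\rho$, which I would obtain from a great-circle/isoperimetric estimate on the sphere, the cheapest separating loop degenerating to ``travel to a puncture along a great-circle arc of length $\pi-\rho/2$ and return.'' A matching upper bound, together with the symmetry $y\mapsto y+2\pi$, comes from explicit competitor paths. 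As $t$ ranges over $(0,\infty)$, $\pi+2\arctan t$ sweeps $(\pi,2\pi)$; combined with the value $\pi$ for \eqref{cr} and $u_{0}$, the diameter takes every value in $[\pi,2\pi)$ and equals $\pi$ precisely for the radial and one-dimensional solutions. I expect the delicate points to be exactly this sharp separating-loop inequality and the complementary verification that no other pair of points is farther apart, rather than the model identifications.
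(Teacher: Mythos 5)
Parts (1)--(3) of your proposal are correct and follow essentially the same route as the paper: reduce to the two normal forms \eqref{cr} and \eqref{u_t} via Theorem~\ref{upb} (after noting that the transformations of Remark~\ref{ts} preserve all the relevant properties), then do direct checks; your checks (boundedness below of \eqref{u_t} on the line $x=0$, the translation-invariance-versus-monotonicity contradiction, and the ``periodic concave $\Rightarrow$ constant'' argument) are valid and in fact more detailed than the paper's one-line verifications.

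Part (4), however, contains a genuine gap: the two steps you yourself flag as delicate are exactly the content of the proof, and you do not carry them out. You correctly identify the extremal pair (two adjacent-sheet lifts of the midpoint $w^*$ of the major great-circle arc joining the omitted values $t$ and $\infty$) and the correct value $2\pi-\rho=\pi+2\tan^{-1}t$, but the ``sharp separating-loop inequality'' is left as a plan resting on an unspecified ``great-circle/isoperimetric estimate,'' and the verification that no pair of points is farther apart is deferred to unspecified competitor paths. Neither step is routine enough to leave implicit, and the isoperimetric route is in any case harder than necessary. The paper closes both gaps with elementary arguments. For the lower bound, it takes $P\in\{y=\pi\}$, $Q\in\{y=-\pi\}$ with $f(P)=f(Q)=w^*$ and observes that any curve $\gamma$ from $P$ to $Q$ must cross the real axis $L_0$; since $f$ is a local isometry and $f(L_0)$ is the \emph{minor} geodesic arc from $t$ to $\infty$, every point of which lies at spherical distance at least $\pi-\rho/2$ from $w^*$, the length of $f\circ\gamma$ is at least $2(\pi-\rho/2)=2\pi-\rho$ --- a purely topological crossing argument, which is the rigorous version of your loop-separation heuristic and needs no isoperimetry. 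For the upper bound over \emph{all} pairs, the paper computes explicitly
\begin{align*}
\sup_{y\in\mathbb{R}}\int_{-\infty}^{\infty}e^{u(x,y)}\,dx=\pi+2\tan^{-1}t ,
\end{align*}
and shows that vertical segments at $x=\pm T$ have length tending to $0$ as $T\to\infty$ (equations \eqref{sup'} and \eqref{w0}); connecting any two points along their horizontal lines toward $x=+\infty$ or $x=-\infty$ (whichever direction is cheaper, the two options together costing at most twice the supremum above) and closing up with a short vertical segment gives $\mathrm{diam}\le\pi+2\tan^{-1}t$. Without these two concrete arguments --- or substitutes of comparable rigor --- your part (4) is an outline rather than a proof.
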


\begin{remark}
By later Remark \ref{class1d}, $u$ is a one-dimensional
solution if and only if $u$ takes the form \eqref{u_t}
for $t=0$, up to the transformations in Remark \ref{ts},
\end{remark}

Except for the concavity problem where we have a pure PDE approach actually
working for all dimensions as we will mention later, it seems to be rather
difficult to prove other results in Theorem \ref{answer} by pure PDE techniques.
In fact, using representation \eqref{complexrepresentation}
and referring to meromorphic function theory, we can analyze a larger class
of solutions to \eqref{liouvilleequation}.

The class $N$ of solutions discussed here is defined by their behavior at infinity, $u(z)\leq O(\log|z|),\; z\to\infty.$
For a real number $k$ we define $N(k)$ to be the set of all $u\in N$
such that
$$\limsup_{z\to\infty}\frac{u(z)}{\log|z|}=k.$$

\begin{theorem}
\label{e1}
$N(k)$ is non-empty if and only if either
$k=-2$, or $2k$ is a non-negative integer.

Class $N(-2)$ consists of those solutions for which the function $f$
in \eqref{complexrepresentation} is a linear-fractional transformation. For all other solutions of
\eqref{liouvilleequation}, function $f$ is transcendental.

For $k\geq0$, $u\in N(k)$ if and only if $f=w_1/w_2$, where $w_1$ and $w_2$
are two linearly independent solutions of the complex differential equation
\begin{equation}\label{ODE}
w''+P(z)w=0,\quad z\in \mathbb{C},
\end{equation}
where $P$ is a polynomial of degree $d=2k$.
\vspace{.1in}

Moreover, if $u \in N(k)$ for $k \ge 0$, then there exists $c>0$ and $\theta_0 \in \mathbb{R}$ such that
\begin{equation}\label{asymptotics}
u(re^{i\theta})=-cr^{k+1}|\sin\left((k+1)(\theta-\theta_0)\right)|+o(r^{k+1}),\quad r\to\infty.
\end{equation}
Moreover, $u(z)\sim k\log |z|,\; |z|\to\infty,$
on some curves tending to $\infty$ with asymptotic directions $\theta$
satisfying $\sin\left(k(\theta-\theta_0)\right)=0$.
\end{theorem}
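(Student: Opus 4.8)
The plan is to trade the PDE for the linear complex ODE \eqref{ODE} via the Schwarzian derivative and then read everything off from the classical asymptotic theory of $w''+Pw=0$. Given a solution $u$, representation \eqref{complexrepresentation} supplies a locally univalent meromorphic developing map $f$; since $f$ is locally univalent with only simple poles, its Schwarzian derivative $S(f)$ is entire, and I set $P:=\tfrac12 S(f)$. Choosing a fundamental system $w_1,w_2$ of \eqref{ODE} with $f=w_1/w_2$ (e.g.\ $w_2=(f')^{-1/2}$, $w_1=fw_2$) and using that the Wronskian $W=w_1'w_2-w_1w_2'$ is a nonzero constant (there is no first-order term), representation \eqref{complexrepresentation} collapses to the identity
\begin{equation*}
u=\log(2|W|)-\log\bigl(|w_1|^2+|w_2|^2\bigr),
\end{equation*}
which one checks solves \eqref{liouvilleequation} directly through $\Delta=4\partial_z\partial_{\bar z}$ and Lagrange's identity. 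Equivalently $e^{u}=2|W|/(|w_1|^2+|w_2|^2)=2f^{\#}$, so $u$ is, up to the constant $\log 2$, the logarithm of the spherical derivative of $f$, and membership $u\le O(\log|z|)$ is exactly the requirement that $f^{\#}$ be polynomially bounded, i.e.\ that $|w_1|^2+|w_2|^2$ be bounded below by a negative power of $|z|$.

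The core of the proof is the dictionary between the growth of $P$ and membership in $N$. First I would show $u\in N$ forces $P$ to be a polynomial: if $P$ were transcendental entire, every nontrivial solution of \eqref{ODE} has infinite order of growth, so $f$ has infinite order; but via the Ahlfors--Shimizu characteristic, polynomial growth of $f^{\#}$ forces finite order, so $u\in N$ is impossible. The two endpoints of the classification are then separated by whether $P$ vanishes. If $P\equiv 0$ then $f$ is a linear-fractional transformation, $w_1,w_2$ are linear, $|w_1|^2+|w_2|^2\sim C|z|^2$, and $u\sim-2\log|z|$; thus these and only these solutions lie in $N(-2)$, and every other solution has $P\not\equiv 0$ and hence transcendental $f$. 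If $P\not\equiv 0$ is a polynomial of degree $d$, complex WKB (Liouville--Green) asymptotics, in each Stokes sector giving solutions $\sim(-P)^{-1/4}\exp(\pm\Psi)$ with $\Psi(z)=\int^{z}\sqrt{-P}\sim\tfrac{2\sqrt{-a}}{d+2}z^{(d+2)/2}$ ($a$ the leading coefficient), place $u$ in $N(d/2)$. Conversely any $u\in N(k)$ with $k\ge0$ arises this way with $d=2k$. Since the trichotomy $P\equiv 0$ / polynomial / transcendental is exhaustive, the achievable values of $k$ are exactly $-2$ and $\tfrac12\mathbb{Z}_{\ge0}$, which proves the first assertion, the transcendence dichotomy, and the ODE characterization simultaneously.

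Finally, feeding the WKB form into the identity for $u$ produces \eqref{asymptotics}. In the bulk of a Stokes sector one solution dominates, so $\log(|w_1|^2+|w_2|^2)\sim 2|\operatorname{Re}\Psi|$ and
\begin{equation*}
u(re^{i\theta})=-2|\operatorname{Re}\Psi|+O(\log r)=-c\,r^{(d+2)/2}\bigl|\sin\bigl(\tfrac{d+2}{2}(\theta-\theta_0)\bigr)\bigr|+o\bigl(r^{(d+2)/2}\bigr),
\end{equation*}
which is \eqref{asymptotics} because $k+1=(d+2)/2$; the phase $\theta_0$ and constant $c>0$ come from $\arg\sqrt{-a}$ and $|a|$. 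Along the anti-Stokes directions, where $\operatorname{Re}\Psi=0$ and the leading term degenerates, the oscillatory solutions have modulus comparable to the prefactor $|P|^{-1/4}\asymp|z|^{-k/2}$ while the Wronskian relation forces $|w_1|^2+|w_2|^2\asymp|z|^{-k}$; hence $u\sim k\log|z|$ along curves approaching these directions, which is the concluding statement.

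I expect the genuine difficulty to be the second and third steps: making the complex WKB asymptotics rigorous uniformly up to the anti-Stokes rays, controlling the Stokes phenomenon that connects the local solutions across sectors, and rigorously excluding transcendental $P$ (for which one needs sharp lower bounds on the minimal modulus of the fundamental system). Once these asymptotics are in hand, the linear identity for $u$ makes the remaining bookkeeping---the precise phase $\theta_0$, the constant $c$, and the check that the $o(r^{k+1})$ error does not swamp the logarithmic growth along the exceptional directions---essentially formal.
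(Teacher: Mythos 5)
Your proposal is correct and takes essentially the same route as the paper: developing map, entire Schwarzian derivative, finite order of $f$ via the Ahlfors--Shimizu/Nevanlinna characteristic, polynomiality of $P$, and then Sibuya-type asymptotic integration (your complex WKB) for both the classification of admissible $k$ and the asymptotics \eqref{asymptotics}. The only differences are cosmetic---your explicit identity $u=\log(2|W|)-\log\bigl(|w_1|^2+|w_2|^2\bigr)$ and your derivation of polynomiality from the infinite-order theorem for transcendental coefficients rather than a direct citation of Nevanlinna's theorem (both rest on the lemma on the logarithmic derivative)---and the uniform-up-to-the-rays asymptotics you flag as the genuine difficulty are precisely what the paper imports from Sibuya's Theorem 6.1.
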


The main ingredient in proving Theorem \ref{e1} is Nevanlinna theory and asymptotic integration theory for the linear ODE \eqref{ODE}. Then with some more effort, we prove Theorem \ref{upb} and Theorem \ref{answer}.


\medskip

\subsubsection{Results in higher dimensions}
Now we state our concavity rigidity results on solutions to the Liouville equation in higher dimensions, which is actually more meaningful as counterpart of the De Giorgi conjecture.

 We consider
\begin{align}
    \label{nliouvilleequation}
-\Delta u =e^{2u}, \quad \mbox{in $\mathbb{R}^n$}.
\end{align}Even though \eqref{nliouvilleequation} has no geometric background when $n\ge 3$, it arises in the theory of gravitational equilibrium of polytropic stars, and classification of solutions to \eqref{nliouvilleequation} is crucial in the study of the corresponding problems in bounded domains, as pointed in \cite{Farina}. We remark that \eqref{nliouvilleequation} has been studied in \cite{Farina} and \cite{DF}. In particular, Dancer and Farina prove that  there are no finite Morse index solutions to \eqref{liouvilleequation} when $3\le n \le 9$, and when $n=2$, finite Morse index solutions must be radial about a point.

We would like to see whether concave solutions to \eqref{nliouvilleequation} are one-dimensional in certain dimensions. As mentioned, this is related to the De Giorgi conjecture. Recall that due to Ghoussoub-Gui \cite{GG98} and Ambrosio-Cabr\'e \cite{AC20},  De Giorgi's conjecture has been proved
in dimensions 2 and 3.  In dimensions 4 and 5 under an odd symmetry condition
\cite{GG03}, and in dimension 4 to 8 under a limit condition \cite{Savin}
(see also the work of Wang \cite{Wang2017}
where a different approach is given), the conjecture is also proven.
 When $n \ge 9$, a counterexample is given by Del Pino-Kowalczyk-Wei \cite{DKW}.

One can see that the De Giorgi conjecture is sensitive to dimensions. Surprisingly, in the Liouville equation case, we prove that with the additional assumption that a concave solution achieves its finite supremum, then in any dimension, such solution must be one-dimensional.

\begin{theorem}
\label{main}
Let $u$ be a concave solution to \eqref{nliouvilleequation}. If further we assume that $u$ has a local maximum at some point $x_0\in \mathbb{R}^n$, then $u=\ln (\sech x_n)$ up to transformation in Remark \ref{ts}.
\end{theorem}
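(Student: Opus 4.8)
The plan is to show that concavity forces the Hessian $D^2u$ to have rank one everywhere, after which the equation collapses to an ODE. I would first record the identity that identifies rank one as the correct target. For the ``$P$-function'' $P:=|\nabla u|^2+e^{2u}$ one computes, using \eqref{nliouvilleequation}, that
\[
\Delta P = 2\bigl(|D^2u|^2-e^{4u}\bigr)=2\bigl(|D^2u|^2-(\Delta u)^2\bigr).
\]
Since $u$ is concave, the eigenvalues $\mu_1,\dots,\mu_n$ of $D^2u$ are $\le 0$, so $(\Delta u)^2=(\sum_i\mu_i)^2=\sum_i\mu_i^2+2\sum_{i<j}\mu_i\mu_j\ge |D^2u|^2$, with equality exactly when at most one $\mu_i$ is nonzero. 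Thus $P$ is superharmonic, and $\Delta P\equiv 0$ is precisely the statement that $D^2u$ has rank $\le 1$ everywhere; this is the rigidity to aim for.

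To obtain it I would pass to the convex, real-analytic function $v:=-u$, which solves $\Delta v=e^{-2v}$ and, because the assumed local maximum of $u$ is global by concavity, attains a global minimum at some $x_0$. The structural step is to invoke the constant rank theorem of \cite{LX2021}: after verifying the structural hypotheses on the nonlinearity required there (the favorable feature being that $v\mapsto e^{2v}$ is convex), one concludes that $D^2v$ has constant rank $k$ on $\mathbb{R}^n$ and, in the strong form, that its kernel is a fixed $(n-k)$-dimensional subspace along which $v$ is affine. Choosing coordinates so this subspace is spanned by $e_{k+1},\dots,e_n$, the solution splits as $v(x)=\bar v(x_1,\dots,x_k)$, where $\bar v$ is strictly convex on $\mathbb{R}^k$, solves the same equation $\Delta\bar v=e^{-2\bar v}$, and attains its global minimum at the projection of $x_0$. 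I expect this constant-rank/splitting step to be the main obstacle, both in checking the hypotheses of the constant rank theorem for \eqref{nliouvilleequation} and in extracting the parallel-kernel (genuine dimension reduction) conclusion.

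It then remains to rule out $k\ge 2$, and this is exactly where the attained minimum is essential. Normalizing $x_0=0$, strict convexity gives $\bar v(e)>\bar v(0)$ for every unit vector $e$, so $m:=\min_{|e|=1}(\bar v(e)-\bar v(0))>0$; monotonicity of convex difference quotients along rays through $0$ then yields the uniform linear lower bound $\bar v(x)\ge \bar v(0)+m|x|$ for $|x|\ge 1$, whence $e^{-2\bar v}\le e^{-2\bar v(0)}e^{-2m|x|}\in L^1(\mathbb{R}^k)$ and $\int_{\mathbb{R}^k}e^{-2\bar v}=:c_0<\infty$. On the other hand convexity gives $\partial_\nu\bar v\ge (\bar v(x)-\bar v(0))/|x|\ge m$ on $\partial B_R$ for $R\ge 1$, so by the divergence theorem
\[
\int_{B_R}e^{-2\bar v}=\int_{\partial B_R}\partial_\nu\bar v\ \ge\ m\,\omega_k R^{k-1}\longrightarrow\infty\quad\text{whenever }k\ge 2,
\]
contradicting $\int_{B_R}e^{-2\bar v}\le c_0$. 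Hence $k=1$, so $u$ depends on a single variable, and solving the resulting ODE $-u''=e^{2u}$ with a maximum gives $u=\ln(\sech x_n)$ up to the scalings, translations and rotations of Remark \ref{ts}. I would note that the minimum-attainment hypothesis enters only in this last stage, which is consistent with the fact that the general concavity problem (Question \ref{q2}) remains open without it.
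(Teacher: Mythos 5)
Your proposal is correct, and its skeleton matches the paper's: invoke a constant rank theorem for the convex function $v=-u$, split off the flat directions, and use the attained extremum to force linear growth and kill the nondegenerate case. But your key nonexistence step is genuinely different from the paper's, and more elementary. The paper rules out strictly concave solutions by observing that linear decay gives $\int_{\mathbb{R}^n}e^{2u}\,dx<\infty$, and then invokes Lemma \ref{ne} for $n\ge 3$ (a potential-theoretic argument: the Newtonian potential of $e^{2u}$ is bounded, so $u$ plus that potential is harmonic and bounded above, hence constant by Liouville's theorem, contradiction), together with a separate appeal to the two-dimensional classification (a radial solution cannot be concave) when the reduced dimension is $2$. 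You instead integrate the equation $\Delta\bar v=e^{-2\bar v}$ over $B_R$ and compare the two sides: convexity plus the attained minimum gives both $\int_{\mathbb{R}^k}e^{-2\bar v}\,dx<\infty$ and $\partial_\nu\bar v\ge m>0$ on $\partial B_R$ for $R\ge 1$, so the flux $\int_{\partial B_R}\partial_\nu\bar v\,dS\ge m\,k\omega_k R^{k-1}$ blows up for every $k\ge2$ while the volume integral stays bounded. This single divergence-theorem argument replaces both Lemma \ref{ne} and the appeal to the planar classification, and it treats all ranks $k\ge 2$ (including $k=n$) uniformly; what the paper's route buys in exchange is Lemma \ref{ne} itself, a statement of independent interest. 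Your opening $P$-function identity is correct ($\Delta P=2(|D^2u|^2-(\Delta u)^2)$ does follow from the equation) but is purely decorative; nothing later uses it.

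Two small repairs. First, the constant rank theorem you need is Korevaar--Lewis \cite[Theorem 2]{KL87}, which under $GG''<2(G')^2$ gives constant rank \emph{and} constancy of $v$ in the $(n-k)$ kernel directions; the hypothesis holds here since for $G(v)=e^{-2v}$ one has $GG''=4e^{-4v}<8e^{-4v}=2(G')^2$. Citing \cite{LX2021} for this step is off target: the theorems from that paper quoted here (Theorems \ref{sigma2theorem} and \ref{sigman}) carry extra hypotheses (a local minimum of $\sigma_2(D^2u)$ or of $\det(D^2u)$) which this paper explicitly states it cannot verify for concave solutions. Second, you should dispose of rank $k=0$ in one line: then $v$ is affine, so $\Delta v=0\neq e^{-2v}$, which is impossible.
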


We give some remarks on our proof. In higher dimensions, we can no longer use complex analysis tools. The key ingredient in obtaining Theorem \ref{main} is the constant rank theorem, which states that if $u$ is a convex solution to some elliptic equation under some conditions, then $D^2 u$ has constant rank. The proof also works in dimension 2.

For the constant rank theorem, we refer to Caffarelli-Friedman \cite{CF} for the case of semilinear equations in $\mathbb{R}^2$ and Korevaar-Lewis \cite{KL87} for the case of semilinear equations in $\mathbb{R}^n$. See also Bian-Guan \cite{BG}, Bian-Guan-Ma-Xu \cite{BGMX}, Caffarelli-Guan-Ma \cite{CGM}, Guan-Lin-Ma \cite{GLM}, Guan-Ma \cite{GM}, Guan-Ma-Zhou \cite{GMZ}, Guan-Xu \cite{GX}, Ma-Xu \cite{MX08}, etc, for the case of other nonlinear elliptic equations with applications.

If $u$ is a solution to \eqref{nliouvilleequation}, then $v:=-u$ satisfies
\begin{align*}
    \Delta v=G(v) \quad \mbox{in $\mathbb{R}^n$},
\end{align*}where $G(v)=e^{-2v}$. It is proved in \cite{KL87} that actually if $GG^{''}<2(G')^2$, then $D^2 v$ has constant rank $r$, and $v$ is a constant in $(n-r)$ coordinate directions. Hence the constant rank theorem can apply in our case.

In order to apply the constant rank theorem to illustrate that concave solutions are one-dimensional, we need to exclude the possibility of strictly concave solutions. With the local maximum condition, this situation can be ruled out, by studying asymptotic behavior of strictly concave solutions and then proving nonexistence of such solutions.

\medskip\

The organization of the paper is as follows. In section 2,
we recall the link between solutions of the Liouville equation
in $\mathbb{R}^2$ and
spherical derivatives of meromorphic functions,
and we classify solutions which are radial about a point or one-dimensional.
In section 3, we first prove Theorem \ref{e1}, and then we prove Theorem \ref{upb} and Theorem \ref{answer}. In section 4, we prove Theorem \ref{main}. In section 5, we give some further remarks.

\section{Liouville's formula and spherical derivative}
In this section, we recall the relation between solutions to the
 Liouville equation \eqref{liouvilleequation} and the
spherical derivatives of meromorphic functions.
Also, we classify all radial and one-dimensional solutions.

Given a solution $u$, the function $f$
in \eqref{complexrepresentation} is called the \textit{developing function}
or \textit{developing map} for $u$.
By \eqref{complexrepresentation}, the conformal factor
of the metric is
\begin{align}
    \label{sphericalderivative}
f^\#(z):=\frac{2|f'(z)|}{1+|f(z)|^2},
\end{align}which is called the \textit{spherical derivative} of $f$.

\begin{remark}
\label{sphericallength}
Recall that the length element of the spherical metric is $2|dz|/(1+|z|^2)$.
The spherical length of a curve $\gamma(t),\, a\le t\le b$ is given by
\begin{align}
    \label{sphlength}
\int_{\gamma}\frac{2|dz|}{1+|z|^2}=\int_a^b \frac{2|\gamma'(t)|}{1+|\gamma(t)|^2} \, dt.
\end{align}
Geometrically this is exactly the length of $\Pi^{-1}(\gamma(t))$ on the unit sphere, where $\Pi$ is the sterographic projection map from the north pole of
$\mathbb{S}^2$ to the complex plane.

If $f$ is a meromorphic function and $\gamma$ is a curve in the plane,
then the spherical length of the image curve $f\circ \gamma$ is given by
\begin{align}
\label{imagelength}
\int_{\gamma}\frac{2|f'(z)|}{1+|f(z)|^2}\, d|z|.
\end{align}
This leads to the definition of the spherical derivative of $f(z)$ given
by \eqref{sphericalderivative}.
\end{remark}

\begin{remark}
\label{res}
In formula \eqref{complexrepresentation}, if $f$ has a critical point, then $u$ is singular at the point. If $f$ has a pole $z_0$, then since \begin{align*}
    f^\#=\left(\frac{1}{f}\right)^\#,
\end{align*}$z_0$ must be simple otherwise it would be a critical point of $1/f$, and this would make the solution singular. Hence a developing function for a classical solution to \eqref{liouvilleequation} must be locally univalent and has at most simple poles.
\end{remark}

As a consequence of \eqref{complexrepresentation} and Remark \ref{sphericallength}, we give the following observation, which further explains the relation between  Liouville equation \eqref{liouvilleequation} and the spherical derivative
of meromorphic functions.
\begin{proposition}
\label{ob}
Given a locally univalent meromorphic function $f$ with at most simple poles, if we define the length of a curve $\gamma$ as \eqref{imagelength}, then this
induces an conformal metric on $\mathbb{R}^2$ of constant curvature $1$.

Conversely, given a a classical solution $u$ to
\eqref{liouvilleequation}, there exists a meromorphic function $f$
such that for any piecewise smooth curve $\gamma$, the length of $\gamma$
under the metric $e^{2u}\delta$,
is exactly the spherical length of $f\circ \gamma$.
\end{proposition}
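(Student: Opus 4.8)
The plan is to verify both directions of Proposition~\ref{ob} by unwinding the definition of the spherical derivative in \eqref{sphericalderivative} together with Liouville's representation \eqref{complexrepresentation}. For the forward direction, suppose $f$ is locally univalent meromorphic with at most simple poles, and set $e^{u(z)}:=f^\#(z)=2|f'(z)|/(1+|f(z)|^2)$. The first step is to observe that $f^\#$ is strictly positive and smooth away from poles of $f$ (local univalence gives $f'\neq 0$, and the identity $f^\#=(1/f)^\#$ recorded in Remark~\ref{res} handles the poles, since they are simple), so $u=\log f^\#$ is a well-defined smooth function on $\mathbb{R}^2$. Then I would compute $\Delta \log f^\#$ directly: writing $\log f^\# = \log 2 + \log|f'| - \log(1+|f|^2)$ and using that $\log|f'|$ is harmonic where $f'\neq 0$ (as $\log|g|$ is harmonic for any nonvanishing holomorphic $g$), the Laplacian reduces to $-\Delta\log(1+|f|^2)$. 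A standard computation with $\Delta = 4\partial_z\partial_{\bar z}$ yields $\Delta\log(1+|f|^2)=4|f'|^2/(1+|f|^2)^2=(f^\#)^2$, so $-\Delta u = (f^\#)^2 = e^{2u}$, which is exactly \eqref{liouvilleequation}. Since the induced length element is $e^u|dz|$ with $e^{2u}$ satisfying the Liouville equation, the metric $e^{2u}\delta$ has constant Gaussian curvature $1$, as claimed.

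For the converse, given a classical solution $u$, the representation \eqref{complexrepresentation} already furnishes a locally univalent meromorphic $f$ (with at most simple poles, by Remark~\ref{res}) such that $e^{u(z)}=f^\#(z)=2|f'(z)|/(1+|f(z)|^2)$ pointwise. The remaining step is purely the change-of-variables bookkeeping connecting the two length formulas. For a piecewise smooth curve $\gamma:[a,b]\to\mathbb{C}$, the length under $e^{2u}\delta$ is $\int_a^b e^{u(\gamma(t))}|\gamma'(t)|\,dt$, and substituting $e^{u(\gamma(t))}=f^\#(\gamma(t))=2|f'(\gamma(t))|/(1+|f(\gamma(t))|^2)$ gives
\begin{equation*}
\int_a^b \frac{2|f'(\gamma(t))|}{1+|f(\gamma(t))|^2}\,|\gamma'(t)|\,dt
=\int_a^b \frac{2|(f\circ\gamma)'(t)|}{1+|(f\circ\gamma)(t)|^2}\,dt,
\end{equation*}
where the last equality uses the chain rule $(f\circ\gamma)'(t)=f'(\gamma(t))\gamma'(t)$. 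By \eqref{sphlength} applied to the image curve $f\circ\gamma$, this is precisely the spherical length of $f\circ\gamma$, which is what the statement asserts.

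Most of this is routine once the pieces from Section~2 are assembled, and the genuine analytic content has in fact already been isolated earlier in the excerpt: the equivalence between solutions of \eqref{liouvilleequation} and the representation \eqref{complexrepresentation}, and the local-univalence/simple-pole conditions in Remark~\ref{res}. The one point requiring slight care, which I would treat as the main (minor) obstacle, is the behavior at poles of $f$: there $|f|\to\infty$ and the naive expression for $f^\#$ is a $\infty/\infty$ indeterminate form, so I would justify smoothness of $u$ across poles by passing to the coordinate $1/f$ via the invariance $f^\#=(1/f)^\#$, after which $f^\#$ is manifestly smooth and the harmonicity/Laplacian computation goes through on a neighborhood of each pole as well. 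Apart from this chart-switching argument, the proposition follows from the definition of spherical length and the elementary curvature computation for the spherical metric.
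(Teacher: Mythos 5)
Your proof is correct, and it supplies details in a place where the paper writes essentially nothing: Proposition \ref{ob} is presented there as an immediate observation, ``a consequence of \eqref{complexrepresentation} and Remark \ref{sphericallength},'' with no separate proof. The paper's implicit argument for the first half is geometric: the length \eqref{imagelength} is by construction the pullback of the round metric on $\mathbb{S}^2$ under $f$, and since $f$ is thereby a local isometry onto a surface of curvature $1$, the induced metric has curvature $1$; the converse half is exactly the chain-rule bookkeeping you wrote out. Your route replaces the geometric step by a direct PDE verification: $\Delta\log|f'|=0$ where $f'\neq 0$, and $\Delta\log\bigl(1+|f|^2\bigr)=4|f'|^2/\bigl(1+|f|^2\bigr)^2=(f^\#)^2$, so $u=\log f^\#$ satisfies $-\Delta u=e^{2u}$ pointwise, with the chart switch $f^\#=(1/f)^\#$ of Remark \ref{res} handling the simple poles. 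This is more elementary and self-contained (it never invokes invariance of Gaussian curvature under local isometries, only the conformal curvature formula $K=-e^{-2u}\Delta u$), whereas the paper's viewpoint buys brevity and keeps the conceptual point --- the developing map is a local isometry --- in the foreground. Note that in the converse direction both you and the paper take as given the existence of the developing map asserted after \eqref{complexrepresentation}; that existence is the only genuinely nontrivial input, and it is consistent with the paper's structure that neither argument reproves it here.
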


The following remark gives a more precise description of developing
functions with respect to solutions to the Liouville equation in $\mathbb{R}^2$.
\begin{remark}
\label{equivliouville}
A developing function $f$ must either be a linear-fractional transform,
or a transcendental meromorphic function.
The former case correspond to solutions radially symmetric about a point.
\end{remark}
\begin{proof}
A rational function cannot be locally univalent unless it is of degree $1$.
\end{proof}


The next two propositions classify radial solutions and one-dimensional solutions to the Liouville equation \eqref{liouvilleequation} as well as the corresponding developing functions.
\begin{proposition}
\label{clradial}
$u$ is a radial solution to \eqref{liouvilleequation}, if and only if the developing function is a linear-fractional transform.
\end{proposition}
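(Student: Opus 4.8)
The plan is to prove both implications through the developing map $f$, using the fact recorded in the excerpt that two developing maps yield the same $u$ precisely when they differ by post-composition with a sphere rotation $\phi(w)=\frac{pw-\bar q}{qw+\bar p}$, and that sphere rotations act transitively on $\bar{\C}$. For the ``if'' direction, suppose $f$ is linear-fractional. I would pick a sphere rotation $\phi$ with $\phi(f(\infty))=\infty$; then $g:=\phi\circ f$ is a linear-fractional map fixing $\infty$, hence affine, say $g(z)=\lambda z+z_0$ with $\lambda\neq0$. Since $u$ is unchanged when its developing map is post-composed with a sphere rotation, I may use $g$ in \eqref{complexrepresentation}, obtaining $u(z)=\log\frac{2|\lambda|}{1+|\lambda z+z_0|^2}$, which depends on $z$ only through $|\lambda|\,|z+z_0/\lambda|$; thus $u$ is radial about $-z_0/\lambda$.

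For the ``only if'' direction, assume $u$ is radial, say about $0$ after a translation, so that $u(e^{i\alpha}z)=u(z)$ for all real $\alpha$. Writing $r_\alpha(z)=e^{i\alpha}z$, a one-line computation gives $(f\circ r_\alpha)^\#=f^\#\circ r_\alpha$, so $f\circ r_\alpha$ is again a developing map for the same $u$. Hence there is a unique sphere rotation $\phi_\alpha$ with
\[
f(e^{i\alpha}z)=\phi_\alpha\bigl(f(z)\bigr).
\]
Computing $f(e^{i(\alpha+\beta)}z)$ in two ways shows $\alpha\mapsto\phi_\alpha$ is a continuous one-parameter group of sphere rotations, and it is nontrivial since $\phi_\alpha\equiv\mathrm{id}$ would force the nonconstant meromorphic $f$ to be rotation-invariant. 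Its image is therefore the group of rotations about a single axis; after replacing $f$ by $\psi\circ f$ for a suitable fixed sphere rotation $\psi$ (which does not change $u$), I may take that axis to be the one through $0$ and $\infty$, so that $\phi_\alpha(w)=e^{ic\alpha}w$, with $c\in\Z$ forced by $2\pi$-periodicity in $\alpha$.

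The functional equation then reads $f(e^{i\alpha}z)=e^{ic\alpha}f(z)$. Setting $h(z):=z^{-c}f(z)$, which is meromorphic on $\C\setminus\{0\}$ because $c\in\Z$, gives $h(e^{i\alpha}z)=h(z)$; rotation-invariance precludes isolated poles, so $h$ is holomorphic on $\C\setminus\{0\}$, and matching Laurent coefficients via $a_n e^{in\alpha}=a_n$ forces $h$ to be constant. Hence $f(z)=a_0 z^{c}$ with $a_0\neq0$. The requirement that a developing map be locally univalent ($f'\neq0$ on $\C$) with at most simple poles rules out $|c|\ge2$ and $c=0$, leaving $c=\pm1$; in either case $f$ is linear-fractional.

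The main obstacle is the ``only if'' direction, and specifically the passage from the geometric radial symmetry of $u$ to the algebraic functional equation for $f$, together with the identification of $\{\phi_\alpha\}$ as rotations about a fixed axis; once $f(e^{i\alpha}z)=e^{ic\alpha}f(z)$ is established, the monomial conclusion and the elimination of all $c\neq\pm1$ via local univalence are routine.
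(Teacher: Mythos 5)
Your proof is correct, but it takes a genuinely different route from the paper's. The paper's argument is geometric and very short: after post-composing with a sphere rotation so that $f(0)=0$, it uses local univalence together with the radial symmetry of $f^\#=e^u$ to conclude that the image of each small circle $\{|z|=r\}$ lies on a spherical circle centered at $f(0)$, i.e.\ on a Euclidean circle $\{|w|=s\}$; the Schwarz reflection principle (an analytic map carrying circles into circles) then forces $f$ to be a linear-fractional transformation. You instead exploit the uniqueness of the developing map up to sphere rotations: radial symmetry of $u$ gives the equivariance relation $f(e^{i\alpha}z)=\phi_\alpha(f(z))$ for a continuous one-parameter group $\{\phi_\alpha\}$ of rotations, which must consist of rotations about a common axis; after normalizing that axis you get $f(e^{i\alpha}z)=e^{ic\alpha}f(z)$ with $c\in\Z$, the Laurent-coefficient argument yields $f(z)=a_0z^c$, and local univalence with simple poles forces $c=\pm1$. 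Your approach is longer and leans on standard but unproved facts (continuity of $\alpha\mapsto\phi_\alpha$ via the cross-ratio/three-point determination of M\"obius maps, and the classification of one-parameter subgroups of the rotation group), whereas the paper leans on the reflection principle; in exchange, yours is more self-contained on the function-theory side, explicitly handles the easy ``if'' direction (which the paper's proof leaves implicit), and produces the sharper normal form $f(z)=az^{\pm1}$ up to rotations, which in effect reproves Proposition \ref{class1d}'s analogue for the radial case rather than merely identifying $f$ as linear-fractional.
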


\begin{proof}
Without loss of generality, we assume $f(0)=0$. This is achieved by the rotation of the image sphere. Since $f$ is locally univalent, we consider a small circle $|z|=r$, so that $f$ has only one zero and no poles in $|z|<r$. Then, since $f^\#$ is radially symmetric, the image of $|z|=r$ belongs to a small circle on the sphere centered at the north pole, which under the stereographic projection
corresponds to a circle $|w|=s$. Then by the reflection principle, $f$, which maps a circle into a circle,  extends to a linear-fractional transformation.
\end{proof}

\begin{proposition}
\label{class1d}
Let $u$ be a solution to  Liouville equation \eqref{liouvilleequation} with $f$ to be its developing function. Then the following conditions are equivalent
\begin{itemize}
    \item $u$ is one-dimensional.
    \item $u$ takes the form
\begin{align}
    \label{1dform}
u(x,y)=\ln\lambda+\ln \left(\sech\left( \lambda (\omega \cdot (x,y) +b)\right)\right), \quad \mbox{for some $\lambda>0, b \in \mathbb{R}$ and $\omega \in \mathbb{S}^1$.}
\end{align}
\item $f$ takes the form
\begin{align}
\label{1dev}
    f(z)=\frac{pf_1-\bar{q}}{qf_1+\bar{p}},
\end{align}where $p,q \in \mathbb{C}$, $|p|^2+|q|^2=1$, and $f_1=ae^{bz}$ with $a, b \in \mathbb{C}\setminus \{0\}$.
\end{itemize}

\end{proposition}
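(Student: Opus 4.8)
The plan is to prove Proposition~\ref{class1d} by establishing the cycle of implications among the three conditions, using the representation formula \eqref{complexrepresentation} and the correspondence between developing functions and solutions recorded in Remark~\ref{ts}. The cleanest route is to show $\eqref{1dev}\Rightarrow\eqref{1dform}\Rightarrow(\text{$u$ one-dimensional})\Rightarrow\eqref{1dev}$, since the passage from an explicit developing map to an explicit formula for $u$ is a direct computation, while the reverse passage from geometric/analytic hypotheses back to the form of $f$ is where the real work lies.

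First I would prove $\eqref{1dev}\Rightarrow\eqref{1dform}$. Since two developing functions related by a rotation of the sphere $\phi(w)=(pw-\bar q)/(qw+\bar p)$ define the same solution $u$, I may replace $f$ by $f_1=ae^{bz}$ without changing $u$. Then I compute the spherical derivative directly: with $f_1'=abe^{bz}$ one has
\begin{equation*}
f_1^\#(z)=\frac{2|ab|\,e^{\operatorname{Re}(bz)}}{1+|a|^2e^{2\operatorname{Re}(bz)}}.
\end{equation*}
Writing $bz=\lambda(\omega\cdot(x,y))+i(\cdots)$ for the appropriate unit vector $\omega$ and scale $\lambda=|b|$, and absorbing $|a|$ into a translation of the form $b$ in \eqref{1dform}, the quotient is exactly $\lambda\sech(\lambda(\omega\cdot(x,y)+b))$ after recognizing $2/(e^{-s}+e^{s})=\sech s$. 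Taking logarithms via \eqref{complexrepresentation} gives \eqref{1dform}. The implication $\eqref{1dform}\Rightarrow(\text{one-dimensional})$ is immediate, since \eqref{1dform} depends on $(x,y)$ only through the single linear functional $\omega\cdot(x,y)$, which is the definition of a one-dimensional solution.

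The substantive direction is (one-dimensional)$\Rightarrow\eqref{1dev}$, and this is where I expect the main obstacle. After a rotation and scaling (Remark~\ref{ts}) I may assume $u=u(y)$ depends only on $y$ with $\lambda=1$. The ODE \eqref{liouvilleequation} reduces to $-u''=e^{2u}$, whose bounded solutions I can integrate explicitly to recover $u(y)=\ln\sech(y+b)$ up to the allowed transformations. The delicate point is translating this back into the claim that the developing map $f$ has the precise form \eqref{1dev}: knowing $u$ determines $f$ only up to post-composition with a sphere rotation, so I must show that \emph{some} representative developing map is $ae^{bz}$. The natural way is to use the Schwarzian or, more elementarily, to observe that $f^\#$ being a function of $\operatorname{Re}(z)$ alone forces $\log f'$ and the combination appearing in \eqref{sphericalderivative} to be compatible only with $f_1=ae^{bz}$; alternatively, one notes that the metric $e^{2u}\delta$ is invariant under translation in the $x$-direction, so the one-parameter isometry group acts by sphere rotations on the image, and a meromorphic locally univalent $f$ intertwining a translation flow with a rotation flow of $\bar{\mathbb C}$ must be an exponential up to a Möbius transformation. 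Making this intertwining argument rigorous—identifying the infinitesimal generator on the sphere side and solving the resulting functional equation—is the crux; the explicit integration of the reduced ODE then confirms the normalization and closes the loop.
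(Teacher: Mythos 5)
Your cycle of implications and both computational steps are sound, and they coincide with the paper's: the paper integrates the reduced ODE via the first integral $(u')^2+e^{2u}\equiv\lambda^2$ to get \eqref{1dform}, exactly as you propose, and your computation of $f_1^\#$ for $f_1=ae^{bz}$ is the same verification the paper performs. The genuine gap is in the direction (one-dimensional) $\Rightarrow$ \eqref{1dev}: you correctly reduce it to producing \emph{some} representative developing map of the form $ae^{bz}$, but you then declare this ``the crux'' and only sketch an intertwining argument (translation flow on the domain versus rotation flow on $\bar{\mathbb{C}}$) whose rigorous execution you explicitly leave open. As written, the most substantive direction of the proposition is not proven.

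The fix is far shorter than what you outline, and you already possess every ingredient. You yourself note that $u$ determines $f$ only up to post-composition with a rotation of the sphere (this is the uniqueness statement the paper records right after \eqref{complexrepresentation}). Given that, a representative developing map does not need to be \emph{derived} by solving any functional equation; it only needs to be \emph{exhibited and verified}. After integrating the ODE you know $u$ has the explicit form \eqref{1dform}, and your own first-implication computation shows that, for suitable $a$ and $b$, the function $f_1=ae^{bz}$ satisfies $\ln f_1^\# = u$ for exactly that expression; hence $f_1$ is a developing map of $u$, and therefore the given developing map satisfies $f=\phi\circ f_1$ for some rotation $\phi$, which is precisely \eqref{1dev}. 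This one-line assembly is exactly the paper's proof of \eqref{1dform} $\Rightarrow$ \eqref{1dev}. Note also that your intertwining route, even if completed, would not bypass this fact: to know that $f(\cdot+t)$ and $f$ differ by a rotation $\phi_t$ you would invoke the same uniqueness-up-to-rotation statement, and you would additionally need regularity of $t\mapsto\phi_t$ to identify the infinitesimal generator, so it is a strictly longer path through the same key fact.
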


\begin{proof}
Up to the transformations discussed in Remark \ref{ts}, we may assume that $u$ depends only on the $x$ variable, and we write $u=u(x)$. Multiplying $u'$ on both sides of \eqref{liouvilleequation}, we have
\begin{align}
\label{pp}
    (u')^2+e^{2u}\equiv constant:=\lambda ^2,
\end{align}where $\lambda>0$ is some constant.
Hence $e^u \le \lambda$, and we may introduce another function $v=v(x)$ such that $e^u=\lambda \sech v$. Hence $u'=-v'\tanh v$. Plugging this into \eqref{pp}, we have $v=\pm x+C$, where $C$ is a constant. Hence $u=\ln \lambda+\ln \sech \lambda(\pm x+C)$.  This proves \eqref{1dform}.

Let $\omega=(\omega_1,\omega_2) \in \mathbb{S}^1$, $\lambda>0$ and $b\in \mathbb{R}$. It is easy to check that when $f(z)=e^{b\lambda}e^{\lambda (\omega_1-i\omega_2)z}$, $\ln f^\#$ is exactly \eqref{1dform}, and hence the developing functions of one-dimensional solutions take the form \eqref{1dev}.

Finally, if $f$ is given by \eqref{1dev}, then direct computation implies
that $u$ is one-dimensional.
\end{proof}

\section{Proof of Theorem \ref{e1}, Theorem \ref{upb} and Theorem \ref{answer}}

Before proving Theorem \ref{e1}, let us recall some basic definitions and facts of Nevanlinna theory, for the courtesy of reader.  A good general reference is \cite{H} and Nevanlinna's own book \cite{N}.

Let $f$ be a meromorphic function, and $f^\#$ be its spherical derivative given by \eqref{sphericalderivative}. Consider the function
$$A(r,f)=\frac{1}{4\pi}\int_{|z|\leq r}(f^\#(z))^2dxdy.$$
This is the ratio of the area of the disk $\{ z:|z|\leq r\}$
with respect to the conformal metric with length element $f^\#|dz|$ to the area of the Riemann sphere. So it is interpreted as the
``average covering number'' of the sphere by the restriction of $f$
on $\{ z:|z|\leq r\}$. Then the Nevanlinna characteristic is defined by
$$T(r,f)=\int_0^r A(t,f)\frac{dt}{t}.$$
Nevanlinna characteristic can be considered as a generalization of
degree of a rational function to transcendental meromorphic functions. For every non-constant $f$ we  have $T(r,f)\to\infty,\;
r\to\infty$, and $f$ is rational if and only if
$$\limsup_{r\to\infty}\frac{T(r,f)}{\log r}<\infty,$$
in which case this limit exists and is equal to degree of $f$.

The order of a meromorphic function is defined by
$$\rho=\rho(f):=\limsup_{r\to\infty}\frac{\log T(r,f)}{\log r},$$
and a function of finite order is said to be of {\em normal type} if
$$\limsup_{r\to\infty}r^{-\rho}T(r,f)$$
is finite and non-zero.

From these definitions one easily obtains:
\vspace{.1in}

{\em If $A(r,f)=O(r^p),\; z\to\infty$ for some real $p$, then
$T(r,f)=O(r^{p}).$} Hence $\rho(f) \le p$.
\vspace{.1in}

If moreover $f$ is locally univalent, then we can say much more. Locally univalent meromorphic functions of finite order have been described
by Nevanlinna \cite{Nev1}-\cite{N} as follows. This class of meromorphic functions coincides with ratios of linearly independent
solutions of (\ref{ODE}) with a polynomial $P$, see also \cite[Theorem 16]{Book}. This is the most important property we use to prove Theorem \ref{e1}.

We give a brief sketch of Nevanlinna's proof of this property. It is easy to show that for every meromorphic function, the Schwarzian derivative given by
\begin{align*}
    Sf:=\left(\frac{f^{''}}{f'}\right)'-\frac{1}{2}\left(\frac{f^{''}}{f'}\right)^2,
\end{align*}is a meromorphic function whose
poles are exactly the critical points of $f$ (that is zeros of $f'$ or
multiple poles). Since our function $f$ is locally univalent, its Schwarzian derivative is entire, which we denote by $2P$. Then the assumption that $f$
is of finite order allows an estimate of $P$
which shows that $P$ is in fact a polynomial.
The estimate is performed with the help of the Lemma on
the Logarithmic Derivative, see for example \cite{H}
which is the main technical result of Nevanlinna's theory.

\medskip

Now let us turn to the proof of Theorem \ref{e1}.
\begin{proof}[Proof of Theorem \ref{e1}]
Let $u \in N(k)$ and $f$ be the developing function of $u$.  By Remark \ref{equivliouville}, either $f$ is a linear-fractional transformation, or $f$ is transcendental meromorphic.

For the former case, since up to transformation in Remark \ref{ts},
$u=\ln\left(2/(1+|z|^2)\right)$, and hence $u \in N(-2)$.

For the later case, if in addition $u$ belongs to $N(k)$ for some $k$, then $f^\#(z)=e^{u(z)}=O(|z|^k)$, and hence $f$ has finite order, with order at most $2k+2$. Moreover, by Nevanlinna theory, the Schwarzian derivative of $f$ is a polynomial, which we denote by $P$. Let $\rho(f)$ be the order of $f$, and $d$ be the degree of $P$. Hence
we can write $f=w_1/w_2$, where $w_1$ and $w_2$ are two linearly independent solutions to
\begin{align}
    \label{ODE'}
w''+Pw=0.
\end{align}
The order $\rho(f)=d/2+1$, and $f$ is of normal type.

There is a powerful asymptotic integration theory for \eqref{ODE'}.
The most complete
reference for this theory is Sibuya's book \cite{S},
one can also consult \cite{Eremenko04} and the recent survey \cite{G}.
Let $a$ be the leading coefficient of $P$, and the Stokes
lines for \eqref{ODE'} are defined by
\begin{align*}
	{\mathrm{Im}}\left(az^{d/2+1}\right)=0.
\end{align*}
The asymptotic theory says that the Stokes lines break the complex plane into $d+2$ disjoint open sectors. Moreover, in each of the sector, $u(z)$ tends to $-\infty$, while along Stokes lines, $u$ grows like $(d/2)\ln|z|$. Hence by definition of $N(k)$, if $k \ne -2$, then $d=2k$, and hence $k$ must be a nonnegative half integer.

Therefore, we have shown $u \in N(k)$ if and only if $k=-2$ or $2k$ is a nonnegative integer.

It remains to prove \eqref{asymptotics}. This also follows from the asymptotic theory for the complex linear ODE \eqref{ODE'}. In each sector $S_j$, $f(z)\to a_j$ as $z\to\infty, z\in S_j$,
where $a_j$ are some points in $\bar{C}$. They are called {\em asymptotic values}
and have the property that $a_{j+1}\neq a_j$ and there are at least three distinct $a_j$ unless $d=0$. Since $f$ can be composed with arbitrary rotation of the sphere, we may assume without loss of generality that
all $a_j$ are finite.

The asymptotic behavior is obtained from \cite[Theorem 6.1]{S}.
Assume without loss of generality that the polynomial $P$ in (\ref{ODE})
is monic. Then $S_1=\{ z:|\arg z|<\pi/(d+2)\}$, and we enumerate the sectors
counterclockwise. In each sector $S_j$ equation (\ref{ODE})
has the so-called subdominant
solution which tends to zero exponentially along the rays in this sector.
For the sector $S_1$, the subdominant solution $w_1$ has the
following asymptotic behavior:
\begin{equation}\label{as}
w_1(z)=\exp\left(-cz^\rho+o(z^\rho)\right)
\end{equation}
where $c>0$ and $|z|\to\infty$, uniformly with respect to the $\arg z$ for
$$|\arg z|\leq \frac{3\pi}{d+2}-\epsilon,$$
for every $\epsilon>0$. It is important that the asymptotics hold
in three adjacent
sectors. If $w_2$ is the subdominant solution for $S_2$, then the ratio
$f_0=w_1/w_2$ has the similar asymptotic to (\ref{as}) with $2c$ instead
of $c$ in the sector $$\{z: -\pi/(d+2)+\epsilon<\arg z<3\pi/(d+2)-\epsilon\}$$
which contains both sectors $S_1$ and $S_2$.
Since asymptotics can be differentiated (Sibuya actually states the asymptotic
of derivative in his Theorem 6.1) and arithmetic operations can be
performed on them, we obtain the asymptotic formula
(\ref{asymptotics}) for $f_0^\#$.
Since $f=L\circ f_0$, where $L$ is a linear-fractional transformation,
and linear fractional transformations are bi-Lipschitz with respect to
the spherical metric, we obtain that $f^\#$ obeys the same asymptotic
formula. The argument applies to all pairs of adjacent sectors
instead of $S_1$ and $S_2$.
The statement on the
behavior of $u$ near the boundaries of the sectors is also
obtained from the same asymptotic formula for $f_0$.
\end{proof}

\begin{proof}[Proof of Theorem \ref{upb}]
Let $f$ be the developing function for $u$.

If $u$ is bounded from above, then $u \in N(-2)\cup N(0)$. For the former case, up to transformation in Remark \ref{ts},  $u$ is given by \eqref{cr}.

For the latter case, as discussed in the proof of Theorem \ref{e1}, the Schwarzian derivative of $f$ is a constant, and hence $f$ must be of type $L(e^{\zeta z})$, where $\zeta$ is a nonzero complex number and $L$ is a linear-fractional transformation. The transformation $f(\cdot) \mapsto f(\frac{1}{\zeta}\cdot)$ belongs  to what is discussed in the Remark \ref{ts}, and hence we may assume that
\begin{align*}
    f=\frac{ae^z+b}{ce^z+d}, \quad ad-bc\ne 0.
\end{align*}

Case 1: If $b\ne 0$ and $d=0$, then $f$ is of the form $A+Be^{-z}$. Then there exists $z_0\in \mathbb{C}$ such that $Be^{z_0}$ has norm $1$ and has the same argument as that of $A$. Hence $f(-z+z_0)$ is of the form $e^{i\theta} (|A|+e^z)$, for some $\theta \in \mathbb{R}$. Hence $u(-z+z_0)$ has developing function $|A|+e^z$. Let $|A|=t$, and thus $u(-z+z_0)$ is given by \eqref{u_t}.

Case 2: If $b=0$ and $d \ne 0$, then since $f^\#=(1/f)^\#$, we go back the previous case.

Case 3: If $b \ne 0$ and $d\ne 0$, then there exists $p, q \in \mathbb{C}$ with $|p|^2+|q|^2=1$ such that $bq+d\bar{p}=0$, and hence \begin{align*}
    \tilde{f}:=\frac{pf-\bar{q}}{qf+\bar{p}}
\end{align*}belongs to Case 1.

Therefore, we conclude that when $u \in N(0)$, up to the transformation in Remark \ref{ts}, $u$ is given by \eqref{u_t}.
\end{proof}

\begin{corollary}
Let $u$ be a solution to  Liouville equation \eqref{liouvilleequation}. If $u$ is strictly decreasing along every  ray from the origin, then $u$ is radial.
\end{corollary}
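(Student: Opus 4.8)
The plan is to reduce everything to the classification in Theorem \ref{upb}. First I would check that the hypothesis already forces $u$ to be bounded from above: for any $z=re^{i\theta}$ with $r>0$, strict monotonicity of $\rho\mapsto u(\rho e^{i\theta})$ on $[0,\infty)$ gives $u(z)<u(0)$, so $u\le u(0)$ on all of $\R^2$. Hence Theorem \ref{upb} applies, and up to a transformation in Remark \ref{ts} the solution $u$ is either the radial solution \eqref{cr} or a solution $u_t$ of the form \eqref{u_t} for some $t\ge 0$. In the first case $u$ is radial by Proposition \ref{clradial} and we are done, so the entire task is to exclude the second case.

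The key observation is that every solution $u_t$ in \eqref{u_t} is periodic in the $y$ variable: the right-hand side of \eqref{u_t} depends on $y$ only through $\cos y$, so $u_t(x,y+2\pi)=u_t(x,y)$, i.e. $u_t(\zeta+2\pi i)=u_t(\zeta)$ for every $\zeta\in\C$. (For $t=0$ this degenerates to $u_0$ being constant in $y$, which is even stronger.) I would then transfer this period through the transformation relating $u$ to $u_t$. Writing the relation from Theorem \ref{upb} as $u(w)=u_t\!\left((w-z_0)/\lambda\right)-\ln|\lambda|$ for suitable $\lambda\in\C\setminus\{0\}$ and $z_0\in\C$, the period $2\pi i$ of $u_t$ becomes a genuine translational period of $u$, namely $u(w+2\pi i\lambda)=u(w)$ for all $w$. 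Set $\tau:=2\pi i\lambda\neq 0$.

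Finally I would exploit that a translational period is detected by a ray from the origin pointing in the period direction. Let $\hat\tau=\tau/|\tau|$ and parametrize the ray $w(s)=s\hat\tau$, $s\ge 0$. Since $w(s)+\tau=(s+|\tau|)\hat\tau=w(s+|\tau|)$, the restriction $s\mapsto u(w(s))$ satisfies $u(w(s+|\tau|))=u(w(s))$, so it is periodic with period $|\tau|>0$. A strictly decreasing function on $[0,\infty)$ cannot be periodic, so $u$ fails to be strictly decreasing along this particular ray from the origin, contradicting the hypothesis. Therefore the case \eqref{u_t} is impossible and $u$ must be radial.

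The main obstacle, and the crux of the argument, is the passage from the intrinsic periodicity of the model solutions $u_t$ to a contradiction with the hypothesis: one must notice that the $e^z$-structure of the developing map forces a translational symmetry, and then choose the probing ray in exactly the period direction so that the symmetry is felt along a ray emanating from the origin. The remaining points — that the radial case \eqref{cr} really yields a radial $u$, and that the degenerate case $t=0$ is covered by the same periodicity argument — are routine.
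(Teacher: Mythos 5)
Your proof is correct and follows essentially the same route as the paper: reduce to the classification of Theorem \ref{upb} and then rule out the family \eqref{u_t}, leaving only the radial case \eqref{cr}. Where the paper merely asserts that a function strictly decreasing along every ray cannot take the form \eqref{u_t}, you supply an explicit and valid reason --- the period $2\pi i$ of $u_t$ survives the transformation of Remark \ref{ts} as a nonzero translational period $\tau$ of $u$, which is incompatible with strict decrease along the ray from the origin in the direction $\tau/|\tau|$; the one detail you gloss over, namely that the center of the radial solution must be the origin (otherwise $u$ would increase along the ray from $0$ toward the center, where \eqref{cr} attains its maximum), is dismissed just as quickly in the paper with the word ``clearly.''
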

\begin{proof}
By the classification results in Theorem \ref{upb},
radially strictly decreasing function cannot take the form \eqref{u_t},
and thus $u$ must be radial about a point.
Clearly, the point has to be the origin.
\end{proof}

Now from Theorem \ref{upb}, we have the following corollaries.
\begin{corollary}
Let $u$ be a solution to the Liouville equation \eqref{liouvilleequation}. If
\begin{align}
\label{as2}
    \lim_{|z|\rightarrow \infty}u(z)=-\infty,
\end{align}then $u$ is radial about a point.
\end{corollary}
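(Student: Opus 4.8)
The plan is to derive this corollary directly from the classification in Theorem \ref{upb}. First I would note that \eqref{as2} forces $u$ to be bounded from above: since $u(z)\to-\infty$ as $|z|\to\infty$, there is a radius $R$ outside of which $u<0$, while on the compact disk $\{|z|\le R\}$ the continuous function $u$ is bounded. Hence Theorem \ref{upb} applies, and up to a transformation in Remark \ref{ts} the solution $u$ coincides with either the radial profile \eqref{cr} or one of the profiles \eqref{u_t} with $t\ge0$.

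Next I would check that both the hypothesis \eqref{as2} and the desired conclusion (radiality about a point) are invariant under the transformations $u\mapsto \ln|\lambda|+u(\lambda z+z_0)$ of Remark \ref{ts}. For the hypothesis this is immediate, since $|\lambda z+z_0|\to\infty$ as $|z|\to\infty$; for the conclusion, a function radial about the origin is carried to a function radial about the point $-z_0/\lambda$. Thus it suffices to determine which of the normal forms \eqref{cr}, \eqref{u_t} satisfy \eqref{as2}. The radial form \eqref{cr} clearly does, since $\ln\!\left(2/(1+|z|^2)\right)\to-\infty$.

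The key point is that no solution of the form \eqref{u_t} satisfies \eqref{as2}. Writing
$$u(x,y)=\ln 2+x-\ln\!\left(1+t^2+2te^x\cos y+e^{2x}\right),$$
one sees that $u$ is $2\pi$-periodic in $y$, so I would evaluate it along the vertical ray $x=0$, $y\to+\infty$, which tends to infinity in $\R^2$. There
$$u(0,y)=\ln\!\left(\frac{2}{2+t^2+2t\cos y}\right),$$
and since the denominator satisfies $2+t^2+2t\cos y\ge (t-1)^2+1>0$ and is bounded above, the value $u(0,y)$ stays bounded and merely oscillates as $y\to+\infty$. Hence $u$ does not tend to $-\infty$ along this ray, and \eqref{as2} fails for every $t\ge0$.

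Combining these observations, a solution satisfying \eqref{as2} cannot reduce to \eqref{u_t} and must therefore reduce to the radial form \eqref{cr}, so $u$ is radial about a point. I do not expect a genuine obstacle here: the entire content is the $y$-periodicity of \eqref{u_t}, which at once exhibits a direction along which these non-radial solutions fail to decay. The only step requiring mild care is verifying that the normalizing transformations of Remark \ref{ts} preserve both the decay hypothesis and radiality, which is routine.
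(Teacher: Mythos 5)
Your proof is correct and takes essentially the same route as the paper: deduce boundedness from above, invoke Theorem \ref{upb}, and rule out the family \eqref{u_t}. The paper states the exclusion of \eqref{u_t} without detail, whereas you justify it explicitly via the $2\pi$-periodicity in $y$ along the ray $x=0$; this is a fair filling-in of the same argument, not a different approach.
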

\begin{proof}
By \eqref{as2}, $u$ cannot be of type \eqref{u_t}. Since \eqref{as2} implies that $u$ is bounded from above, by Theorem \ref{upb}, $u$ must be radial about a point.
\end{proof}

\begin{corollary}
Let $u$ be a solution to the Liouville equation \eqref{liouvilleequation}. If $u$ is symmetric about $x$-axis and $y$-axis, and moreover $u_x<0,\, u_y<0$ in the first quadrant, then $u$ is radial.
\end{corollary}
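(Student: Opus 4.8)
The plan is to reduce everything to the classification in Theorem~\ref{upb} and then to exclude the non-radial alternative \eqref{u_t}. First I would check that the hypotheses force $u$ to be bounded from above. By continuity from the open first quadrant we have $u_x\le 0$ on the positive $x$-axis and $u_y\le 0$ on the positive $y$-axis, while $u_x<0$ and $u_y<0$ in the interior. Hence, for $(x,y)$ in the closed first quadrant, $u(x,y)\le u(0,y)\le u(0,0)$, and by the two reflection symmetries $u\le u(0,0)$ everywhere. Thus $u$ is bounded from above, and Theorem~\ref{upb} gives that, up to a transformation in Remark~\ref{ts}, $u$ is either radial as in \eqref{cr} or of the form \eqref{u_t}. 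It remains to rule out \eqref{u_t}.

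Next I would record the geometry of \eqref{u_t}. When $t=0$ it is the one-dimensional solution $\ln\sech x$, which is constant in $y$; when $t>0$ it is genuinely two-dimensional, $2\pi$-periodic in $y$, and satisfies $u\to-\infty$ as $x\to\pm\infty$. So in each case there is a distinguished line along which $u$ stays bounded—constant if $t=0$, non-constant periodic if $t>0$—while $u\to-\infty$ transversally. This can also be read off from the asymptotics \eqref{asymptotics} of Theorem~\ref{e1} with $k=0$, namely $u(re^{i\theta})=-c\,r\,|\sin(\theta-\theta_0)|+o(r)$, so that $u$ fails to decay linearly precisely along the two rays $\theta=\theta_0$ and $\theta=\theta_0+\pi$. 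Under a transformation from Remark~\ref{ts} this distinguished line is only rotated, scaled and translated, and the period direction (when $t>0$) is carried along with it.

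Then I would use the double symmetry to pin down $\theta_0$. Symmetry of $u$ about the $x$-axis forces the angular profile $|\sin(\theta-\theta_0)|$ to be invariant under $\theta\mapsto-\theta$; squaring and using $\sin^2A-\sin^2B=\sin(A+B)\sin(A-B)$ gives $\sin(2\theta)\sin(2\theta_0)=0$ for all $\theta$, whence $\sin(2\theta_0)=0$ and $\theta_0\in\{0,\pi/2\}$. So the bounded direction is a coordinate axis. Finally I would invoke the monotonicity: if the bounded axis is the $x$-axis, then $x\mapsto u(x,0)$ is non-increasing on $[0,\infty)$ since $u_x\le 0$ there, yet along this axis $u$ is either constant (the case $t=0$, which makes $u_x\equiv 0$ and contradicts $u_x<0$ in the first quadrant) or non-constant periodic (the case $t>0$, where a non-increasing periodic function must be constant, again a contradiction). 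The $y$-axis case is identical with $x$ and $y$ interchanged. Either way \eqref{u_t} is excluded, so $u$ is radial about a point.

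The main obstacle is the bookkeeping in the last step: one must verify that after an arbitrary transformation $z\mapsto\lambda z+z_0$ from Remark~\ref{ts} the distinguished direction and, for $t>0$, the period really do align with a coordinate axis and persist along that exact axis through the origin, so that the monotonicity assumptions can be applied verbatim. Concretely, tracking how the period lattice of the $t>0$ solution transforms (which pins $\lambda$ down to essentially a quarter-turn composed with a dilation) is the delicate part; the observation that only the non-strict inequalities $u_x\le 0$ and $u_y\le 0$ on the axes—obtained by continuity from the open first quadrant—are needed is what makes the final contradiction clean.
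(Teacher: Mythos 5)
Your proof is correct and follows the paper's approach: deduce boundedness from above using the symmetries and the sign conditions on $u_x,u_y$, invoke the classification of Theorem~\ref{upb}, and then exclude the family \eqref{u_t}. The paper compresses that last step into the phrase ``a direct check''; your way of carrying it out---using the asymptotics \eqref{asymptotics} with $k=0$ to pin the distinguished (bounded) direction to a coordinate axis, and then deriving a contradiction from a non-increasing restriction that must be constant ($t=0$, forcing $u_x\equiv 0$) or non-constant periodic ($t>0$)---is a valid and complete execution of exactly that check, including the transformation bookkeeping the paper leaves implicit.
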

\begin{proof}
By hypothesis, $u$ is bounded from above. Then by Theorem \ref{upb},
$u$ must be either of the form (\ref{cr}) or
of the form (\ref{u_t}), up to the transformation
in Remark 1.5. A direct check shows that (\ref{u_t}) is excluded,
so $u$ must be radial about a point, and the point must be the origin.
\end{proof}
\begin{corollary}
\label{2d}
Let $u$ be a solution to  Liouville equation \eqref{liouvilleequation}. If $u$ is bounded from above, then $u=\ln(\sech y)$, up to transformation
in Remark \ref{ts}.
\end{corollary}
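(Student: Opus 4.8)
The plan is to obtain the statement directly from the classification in Theorem~\ref{upb} and then to isolate the one-dimensional solution. Since $u$ is bounded from above, Theorem~\ref{upb} tells us that, up to a transformation in Remark~\ref{ts}, $u$ is either the radial solution \eqref{cr} or a member of the family \eqref{u_t} for some $t\ge 0$. The target $\ln(\sech y)$ is precisely \eqref{u_t} with $t=0$: at $t=0$ that formula reduces to $\ln\big(2e^{x}/(1+e^{2x})\big)=\ln(\sech x)$, and the rotation $z\mapsto iz$ (i.e. $\lambda=i$, $z_0=0$ in Remark~\ref{ts}) turns this into $\ln(\sech y)$. Thus the entire task is to eliminate the radial solution \eqref{cr} and the members of \eqref{u_t} with $t>0$. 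These are not excluded by boundedness alone---each is itself bounded from above---so the elimination must use the concavity of $u$, the hypothesis under which Theorem~\ref{answer}(3) is stated, and that is what I would invoke here.

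Because the maps $z\mapsto\lambda z+z_{0}$ of Remark~\ref{ts} are $\R$-affine (and their inverses are of the same form), $u$ is concave if and only if its normalized representative is; hence it suffices to test \eqref{cr} and \eqref{u_t} with $t>0$. For the radial solution $u(r)=\ln 2-\ln(1+r^{2})$ one computes
\begin{equation*}
u''(r)=\frac{2(r^{2}-1)}{(1+r^{2})^{2}},
\end{equation*}
which is strictly positive for $r>1$, so the radial eigenvalue of the Hessian is positive there and $u$ fails to be concave. For \eqref{u_t} with $t>0$, writing the denominator as $C+D\cos y$ with $C=1+t^{2}+e^{2x}$ and $D=2te^{x}>0$ and differentiating twice in $y$ along the line $y=0$ gives
\begin{equation*}
u_{yy}(x,0)=\frac{D}{C+D}=\frac{2te^{x}}{(e^{x}+t)^{2}+1}>0,
\end{equation*}
so here too the Hessian has a positive diagonal entry and concavity is violated.

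With both competitors discarded, the only surviving representative is \eqref{u_t} at $t=0$, namely $u=\ln(\sech x)$, which is genuinely concave since its Hessian equals $\mathrm{diag}(-\sech^{2}x,\,0)\le 0$; a final rotation from Remark~\ref{ts} rewrites it as $\ln(\sech y)$. The reduction to two candidates is immediate from Theorem~\ref{upb}, so the only real content lies in the two sign computations above. I expect the elimination of the $t>0$ members of \eqref{u_t} to be the step most worth care: rather than analyze the full Hessian it is cleanest to exhibit a single line ($y=0$) on which $u$ is strictly convex, which is exactly what the computation of $u_{yy}(x,0)$ accomplishes.
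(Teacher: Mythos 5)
Your proposal is correct and takes essentially the same route as the paper's own proof: the paper likewise reduces to the two candidates \eqref{cr} and \eqref{u_t} via Theorem~\ref{upb} and then eliminates \eqref{cr} and the $t>0$ members of \eqref{u_t} by concavity, merely asserting (``clearly,'' ``only when $t=0$'') what you verify with explicit second-derivative computations, both of which check out. You were also right to flag that the corollary as printed omits the hypothesis ``concave'' (it is the statement of Theorem~\ref{answer}(3), and boundedness alone cannot suffice since \eqref{cr} and \eqref{u_t} with $t>0$ are bounded above); the paper's proof indeed uses concavity, so supplying that hypothesis as you did is the intended reading.
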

\begin{proof}
Clearly radial solutions are not concave. For the family of solutions given by \eqref{u_t}, only when $t=0$ the solution is concave. Hence up to the transformations discussed in Remark \ref{ts}, $u=\ln\sech(y)$.
\end{proof}


\begin{corollary}
If a solution $u$ is bounded from above, then under the metric $e^{2u}\delta$, the range of diameter of $\mathbb{R}^2$ is exactly $[\pi, 2\pi)$. The diameter equals to $\pi$ if and only if either $u$ is radial about a point, or $u$ is one-dimensional.
\end{corollary}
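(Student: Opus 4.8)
The plan is to read everything off the classification in Theorem \ref{upb} together with the geometric meaning of the developing map in Proposition \ref{ob}. First I would record that the transformations in Remark \ref{ts} are \emph{isometries} of the metric $e^{2u}\delta$: if $\tilde u(z)=\ln|\lambda|+u(\lambda z+z_0)$, then the affine map $z\mapsto\lambda z+z_0$ pulls $e^{2u}\delta$ back to $e^{2\tilde u}\delta$, so it preserves the diameter. Hence by Theorem \ref{upb} it suffices to compute the diameter for the two normal forms \eqref{cr} and \eqref{u_t}. By Proposition \ref{ob} the length of a curve $\gamma$ in $(\mathbb{R}^2,e^{2u}\delta)$ equals the spherical length of $f\circ\gamma$, where $f$ is the developing map, so $(\mathbb{R}^2,e^{2u}\delta)$ is the pullback by $f$ of the round sphere. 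For the radial solution \eqref{cr} one has $f=\mathrm{id}$, so the space is the round sphere with the point $\infty$ deleted; its intrinsic diameter is $\pi$, since minimizing spherical geodesics between finite points can be chosen to avoid $\infty$ and antipodal finite points realize $\pi$.

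For the family \eqref{u_t} the developing map is $f_t(z)=e^z+t$ (a direct check of \eqref{complexrepresentation}), which is the universal covering of the twice-punctured sphere $\Omega_t:=\widehat{\mathbb{C}}\setminus\{t,\infty\}$ with deck transformation $\tau:z\mapsto z+2\pi i$. Write $P=t$, $Q=\infty$, and $\rho_P(z)=d_{\mathrm{sph}}(f_t(z),P)$, $\rho_Q(z)=d_{\mathrm{sph}}(f_t(z),Q)$. I would first show that each incomplete end ($\mathrm{Re}\,z\to-\infty$ over $P$, $\mathrm{Re}\,z\to+\infty$ over $Q$) completes to a single point $P^\ast,Q^\ast$, with $d(z,P^\ast)=\rho_P(z)$ and $d(z,Q^\ast)=\rho_Q(z)$ (the minimizing lift of a radial geodesic). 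The triangle inequality through $P^\ast$ or $Q^\ast$ then gives, for all $z_1,z_2$,
\begin{equation*}
d(z_1,z_2)\le\min\bigl(\rho_P(z_1)+\rho_P(z_2),\ \rho_Q(z_1)+\rho_Q(z_2)\bigr).
\end{equation*}
Since $\rho_P(z)+\rho_Q(z)=2\pi-\bigl(d_{\mathrm{sph}}(f_tz,\bar P)+d_{\mathrm{sph}}(f_tz,\bar Q)\bigr)\le 2\pi-d_{\mathrm{sph}}(P,Q)$ (triangle inequality for the antipodes $\bar P,\bar Q$), if both arguments of the $\min$ exceeded $2\pi-d_{\mathrm{sph}}(P,Q)$ their sum would exceed $2\bigl(2\pi-d_{\mathrm{sph}}(P,Q)\bigr)$, a contradiction. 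Hence the $\min$ is always $\le 2\pi-d_{\mathrm{sph}}(P,Q)$, and using $d_{\mathrm{sph}}(t,\infty)=\pi-2\arctan t$ I get the upper bound $\mathrm{diam}\le\pi+2\arctan t<2\pi$.

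The matching lower bound is the main obstacle: the two points realizing the diameter project to the \emph{same} point of $\Omega_t$, so the naive estimate $d(z_1,z_2)\ge d_{\mathrm{sph}}(f_tz_1,f_tz_2)$ is useless and one must quantify the cost of nontrivial winding in an incomplete metric. I would handle this by cutting. Let $\sigma$ be a minimizing geodesic arc from $P$ to $Q$, so $\Omega_t\setminus\sigma$ is a simply connected disk and $f_t^{-1}(\Omega_t\setminus\sigma)$ is a disjoint union of sheets $\{F_n\}_{n\in\mathbb{Z}}$ permuted by $\tau$, glued consecutively along lifts of $\sigma$; removing the lift $\widetilde\sigma$ separating $F_0$ from $F_1$ disconnects the cover. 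Thus any path from $z\in F_0$ to $\tau z\in F_1$ crosses $\widetilde\sigma$ at some $q$ with $f_t(q)\in\sigma$, so
\begin{equation*}
d(z,\tau z)\ge d(z,q)+d(q,\tau z)\ge 2\,d_{\mathrm{sph}}(f_tz,\sigma).
\end{equation*}
Taking $z$ with $f_t(z)$ the antipode of the midpoint of $\sigma$ gives $d_{\mathrm{sph}}(f_tz,\sigma)=\pi-\tfrac12 d_{\mathrm{sph}}(P,Q)$, so $d(z,\tau z)\ge 2\pi-d_{\mathrm{sph}}(P,Q)$ and the upper bound is attained. Therefore $\mathrm{diam}(\mathbb{R}^2,e^{2u_t}\delta)=\pi+2\arctan t$.

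Assembling: the radial solution gives diameter $\pi$, and as $t$ runs over $[0,\infty)$ the value $\pi+2\arctan t$ runs continuously and strictly monotonically over $[\pi,2\pi)$, so the diameter of an arbitrary bounded-above solution takes exactly the values in $[\pi,2\pi)$. Moreover the diameter equals $\pi$ precisely for the radial solution and for $t=0$; but \eqref{u_t} with $t=0$ is $\ln\bigl(2e^x/(1+e^{2x})\bigr)=\ln\sech x$, the one-dimensional solution (cf.\ Proposition \ref{class1d}), while for every $t>0$ one has $\pi+2\arctan t>\pi$. This is the asserted dichotomy. The only delicate points to write out carefully are that each incomplete end completes to a single point and the sheet-separation step in the lower bound; the rest is the triangle inequality and the elementary identity $d_{\mathrm{sph}}(t,\infty)=\pi-2\arctan t$.
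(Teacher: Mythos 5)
Your proposal is correct, and it is worth separating what coincides with the paper's proof from what is genuinely different. The reduction is identical: the diameter is invariant under the transformations of Remark \ref{ts}, so Theorem \ref{upb} reduces everything to the normal forms \eqref{cr} and \eqref{u_t}, and the radial case is immediate. Your \emph{lower} bound for \eqref{u_t} is also the paper's own argument, just phrased in covering-space language: the paper cuts along the real axis $L_0$, whose image under $f=t+e^z$ is precisely your minimizing geodesic $\sigma$ (the arc from $A$ to $N$), picks the pair $P\in L_+$, $Q\in L_-$ lying over the antipode of the midpoint of $\sigma$ (your pair $z,\tau^{-1}z$), and replaces Jordan separation by the intermediate value theorem applied to ${\rm Im}\,z$ --- in these coordinates your $\widetilde\sigma$ \emph{is} the real axis, so the two separation arguments coincide and give the same value $2\pi-d_{\mathrm{sph}}(P,Q)=\pi+2\tan^{-1}t$. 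The genuine difference is the \emph{upper} bound. The paper is computational: it integrates $e^u$ along horizontal lines to get \eqref{sup'} and closes paths with far-away vertical segments of vanishing length via \eqref{w0}. You instead complete the metric at the two ends and run the triangle inequality through $P^\ast,Q^\ast$, combined with the antipodal identity $d_{\mathrm{sph}}(w,P)=\pi-d_{\mathrm{sph}}(w,\bar P)$ and the min/sum trick; this avoids all computation and in fact proves the general statement that the universal cover of $\bar{\C}\setminus\{P,Q\}$ with the pulled-back spherical metric has diameter at most $2\pi-d_{\mathrm{sph}}(P,Q)$. Its cost is exactly the lemma you flagged: each end completes to a single point with $d(z,P^\ast)=\rho_P(z)$. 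That lemma must be written out, but it is elementary: on $\{\pm{\rm Re}\,z\ge M\}$ one has $e^u\le Ce^{-|x|}$, so two such points can be joined by two horizontal rays plus one vertical segment placed at ${\rm Re}\,z=\pm(M+|y_1-y_2|)$, of total length $O(e^{-M})$; hence these sets have diameter tending to $0$, and $d(z,P^\ast)\le\rho_P(z)$ follows by lifting spherical geodesics toward $P$ (perturbed slightly if they pass through the other puncture). One cosmetic fix: the chain $d(z,\tau z)\ge d(z,q)+d(q,\tau z)$ should read ``the length of any path from $z$ to $\tau z$ is at least $d(z,q)+d(q,\tau z)$, where $q$ is a crossing point,'' since as written it contradicts the triangle inequality.
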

\begin{proof}
By Theorem \ref{upb}, up to the transformation discussed in Remark \ref{ts}, $u$ is either given by \eqref{cr} or \eqref{u_t},  where the diameter equals to $\pi$ and $\pi+2\tan^{-1}t$ respectively, see \cite[Proposition 1.2]{GL}.

The first fact is clear, and for the courtesy of the reader, we prove the latter fact. The proof is simpler than that in \cite{GL}.

We let $L_{\pm}=\{(x,\pm\pi): x\in \mathbb{R}\}$ and $L_0$ be the real axis. We also let $O$ be the origin, $N$ be the north pole on $S^2$, and $A$ be the point $(\frac{2t}{1+t^2},0,\frac{t^2-1}{1+t^2})$.
Note that a solution given by \eqref{u_t} has a developing function $f=t+e^z$. Hence in the Riemann sphere, $f(L_{\pm})$ is exactly the geodesic from $A$ to $N$ along the south pole. Hence we can choose $P \in L_+$ and $Q\in L_-$ such that in the Riemann sphere, both $f(P)$ and $f(Q)$ lie in the middle of the circular arc from $A$ to $N$ along the south pole. Since any curve from $P$ to $Q$ must intersect $L_0$, and $f(L_0)$ is the minimizing geodesic from $A$ to $N$ along the northern sphere, we have that the spherical length of $f\circ \gamma$ must be bounded below by $2\pi-\alpha$, where $\alpha$ is the angle between $OA$ and $ON$. Note that
\begin{align*}
    \tan \alpha=\frac{2t}{t^2-1},
\end{align*}
we have that $t=\tan(\frac{\pi-\alpha}{2})$, and hence $\alpha=\pi-2\tan^{-1}t$. Hence the spherical length of $f\circ \gamma$ is bounded below by $\pi+2\tan^{-1}t$.

To prove the upper bound, note that
\begin{align*}
    \int_{-\infty}^{\infty}e^{u(x,y)}dx=&\frac{2}{\sqrt{1+t^2\sin^2 y}}\tan^{-1}(\frac{t\cos y+e^x}{\sqrt{1+t^2 \sin^2 y}})\Big|_{-\infty}^{\infty}\\
    =&\frac{2}{\sqrt{1+t^2\sin^2 y}}\left(\frac{\pi}{2}-\tan^{-1}(\frac{t\cos y}{\sqrt{1+t^2 \sin^2 y}})\right).
\end{align*}
Hence \begin{align}
    \label{sup'}
\sup_{y \in \mathbb{R}}\int_{-\infty}^{\infty}e^{u(x,y)}dx=\pi+2\tan^{-1}(t).
\end{align}
Also, it is easy to see that for fixed $-\infty<T_1<T_2<\infty$.
\begin{align}
\label{w0}
   \lim_{x\rightarrow \pm \infty} \int_{T_1}^{T_2}e^{u(x,y)}\, dy=0.
\end{align}Hence by \eqref{sup'} and \eqref{w0}, we have that $diam_g(\mathbb{R}^2) \le \pi+2\tan^{-1}(t)$.

Hence if $u$ is given by \eqref{u_t}, the diameter of $\mathbb{R}^2$ is exactly $\pi+2\tan^{-1}(t)$, and for such family, only when $t=0$, the diameter is equal to $\pi$.

Note that under the metric $e^{2u}\delta$,
the diameter of $\mathbb{R}^2$ does not change
if we perform the transformations discussed in Remark \ref{ts},
hence we proved the corollary.
\end{proof}

\begin{proof}[Proof of Theorem \ref{answer}]
The  theorem  is a combination of the above four corollaries.
\end{proof}

\section{Proof of Theorem \ref{main}}
The following lemma is crucial in proving Theorem \ref{main} in any dimensions.
\begin{lemma}
\label{ne}
Let $u$ be a solution to \eqref{nliouvilleequation}, where $n \ge 3$. If further more $u$ has an upper bound and
\begin{align}
    \label{fv}
\int_{\mathbb{R}^n}e^{2u}\, dx<\infty,
\end{align}then such solution $u$ does not exist.
\end{lemma}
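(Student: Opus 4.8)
The plan is to exploit the superharmonicity of $u$ together with the integrability hypothesis \eqref{fv} to force $u$ to be bounded \emph{from below} by a constant, which immediately contradicts the finiteness of $\int_{\mathbb{R}^n} e^{2u}$. The whole argument hinges on the dimension restriction $n\ge 3$.

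First I would record the two facts about $f:=e^{2u}$ that the hypotheses give us: $f\ge 0$, $f\in L^1(\mathbb{R}^n)$ by \eqref{fv}, and $f\in L^\infty(\mathbb{R}^n)$ because $u$ is bounded above. In particular $-\Delta u=f\ge 0$, so $u$ is superharmonic. Next, using $n\ge 3$, I would introduce the Newtonian potential $N(x)=c_n\int_{\mathbb{R}^n}|x-y|^{2-n}f(y)\,dy$, where $c_n|x|^{2-n}$ is the fundamental solution of $-\Delta$. Splitting the integral over $\{|x-y|\le 1\}$ and $\{|x-y|>1\}$ and using $f\in L^1\cap L^\infty$ shows that $N$ is finite and bounded on $\mathbb{R}^n$; moreover $N\ge 0$ and $-\Delta N=f$.

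Then I would set $h:=u-N$, which satisfies $-\Delta h=f-f=0$ and so is harmonic on all of $\mathbb{R}^n$ (smooth by Weyl's lemma). Since $N\ge 0$ and $u$ is bounded above, $h=u-N\le u\le \sup_{\mathbb{R}^n}u<\infty$, so $h$ is a harmonic function bounded from above on $\mathbb{R}^n$, hence constant by the Liouville theorem, say $h\equiv c$. Consequently $u=c+N\ge c$ everywhere, which yields $e^{2u}\ge e^{2c}>0$ pointwise on $\mathbb{R}^n$, and therefore $\int_{\mathbb{R}^n}e^{2u}\,dx=+\infty$, contradicting \eqref{fv}. This establishes that no such $u$ exists, proving Lemma \ref{ne}.

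The main obstacle is the potential-theoretic step: one must verify carefully that $N$ is finite (so that the decomposition $u=c+N$ makes sense), that $u-N$ is genuinely harmonic globally, and that the Liouville theorem for one-sided bounded harmonic functions applies. The assumption $n\ge 3$ is essential and enters at precisely this point, since it is what makes the kernel $|x|^{2-n}$ decay and the potential estimates close; in $n=2$ the logarithmic kernel destroys the argument, consistent with the fact that the spherical solution \eqref{cr} is bounded above with finite area there.
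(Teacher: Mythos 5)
Your proof is correct and follows essentially the same route as the paper: the paper defines $w=-N$ (the negative Newtonian potential of $e^{2u}$), verifies its boundedness by the same $L^1\cap L^\infty$ splitting, applies the Liouville theorem to the harmonic function $u+w=u-N$ bounded from above, and derives the same contradiction with \eqref{fv}. The only cosmetic difference is that you phrase the final step as $u\ge c$ forcing $\int e^{2u}=\infty$, while the paper says boundedness of $u$ contradicts the requirement $\liminf_{|x|\to\infty}u(x)=-\infty$; these are the same observation.
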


\begin{proof}
If \eqref{fv} is true, then we can define
\begin{align}
    \label{w}
w(x):=-\frac{1}{n(n-2)\omega_n}\int_{\mathbb{R}^n}    |x-y|^{2-n}e^{2u(y)}\, dy,
\end{align}where $\omega_n$ is the volume of the unit ball in $\mathbb{R}^n$. Such $w$ is well-defined. Also, for any $x$, since $u$ has an upper bound, the value of the above integral over $B_1(x)$ is uniformly bounded. Over the complement of $B_1(x)$, the integral is also uniformly bounded, due to the condition \eqref{fv}. Therefore, $w(x) \in L^{\infty}(\mathbb{R}^n)$.

Note that $\Delta w(x)=e^{2u(x)}$, and hence
\begin{align*}
    \Delta (u+w)=0\quad \mbox{in $\mathbb{R}^n$.}
\end{align*}Since both $u$ and $w$ has an upper bound, we conclude by Liouville Theorem that $u+v$ is a constant in $\mathbb{R}^n$. Since $w \in L^{\infty}(\mathbb{R}^n)$, $u \in L^{\infty}(\mathbb{R}^n)$. This however contradicts to \eqref{fv}, since it would require that $\liminf_{|x|\rightarrow \infty}u(x)=-\infty$.
\end{proof}

\begin{remark}
Lemma \ref{ne} does not hold in two dimensions. The reason why it is true in higher dimensions is because the fundamental solution to $-\Delta$ decays to zero near infinity, while in two dimensions this is not true.
\end{remark}

Now we can prove Theorem \ref{main}.

\begin{proof}[Proof of Theorem \ref{main}]
Without loss of generality we assume that $u$ has a local maximum at $0$, and thus $u$ achieves its supremum at $0$. Suppose that $u$ is strictly concave, then for $z \in \mathbb{R}^n$, we have
\begin{align*}
    \sup_{|z|=1}\left(u(z)-u(0)\right)\le -a,\quad \mbox{for some $a>0$.}
\end{align*}
Again by concavity,
\begin{align*}
    \sup_{|z|\ge 1}\left(\frac{u(z)-u(0)}{|z|}\right)\le -a,\quad \mbox{for some $a>0$.}
\end{align*}
Hence for $|z|\ge 1$,
\begin{align*}
    \frac{u(z)}{|z|}=\frac{u(z)-u(0)}{|z|}+\frac{u(0)}{|z|}\le -a+\frac{u(0)}{|z|}.
\end{align*}
Hence
\begin{align}
\label{lineardecay}
    u(z)\le -a|z|+u(0).
\end{align}
Therefore, $\int_{\mathbb{R}^n}e^{2u(y)}\, dy<\infty$. When $n\ge 3$, this is impossible due to Lemma \ref{ne}. When $n=2$, $u$ is radial, but a radial solution cannot be concave, and thus we also obtain a contradiction.

Hence $u$ is not strictly concave, and thus by the constant rank theorem (see \cite[Theorem 2]{KL87}), $D^2 u$ has constant rank $r$, with $r<n$. Hence $u$ is a constant in $n-r$ coordinates. If $r>1$, then we may assume that $u$ is a constant in $x_{n-r+1}, \cdots, x_n$ variables. Define a function $U(x'):=u(x', x_{n-r+1}, \cdots, x_n)$, where $x' \in \mathbb{R}^{r}$. Hence $U$ is a strictly concave entire solution to the Liouville equation in $r$ dimensions. However, applying the previous argument, $U$ cannot be strictly concave, unless $r=1$. Hence we have shown that $D^2 u$ has constant rank $1$, and hence level sets of $u$ are flat.
\end{proof}

\section{Further remarks}

Questions \ref{q2}-\ref{q3} remain open in their full generality. On the two
questions, we propose the following conjectures.

\begin{conjecture}
Let $u$ be a solution to  Liouville equation \eqref{liouvilleequation}. If under the metric $e^{2u}\delta$ where $\delta$ is the standard Euclidean metric, $\mathbb{R}^2$ has diameter $\pi$, then $u$ must be either radial about a point, or one-dimensional.
\end{conjecture}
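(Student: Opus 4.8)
The plan is to reduce the full conjecture to the case already settled in the corollary to Theorem \ref{upb}. That corollary shows that once $u$ is known to be bounded from above, the metric $e^{2u}\delta$ has diameter exactly $\pi$ if and only if $u$ is radial about a point or one-dimensional. Hence the entire remaining content of the conjecture is the single implication
\[
\operatorname{diam}_{e^{2u}\delta}(\mathbb{R}^2)=\pi \ \Longrightarrow\ u \text{ is bounded from above.}
\]
Arguing by contraposition, the goal becomes to show that every solution which is \emph{not} bounded from above has conformal diameter strictly greater than $\pi$. In view of the range computation in the corollary, where the family \eqref{u_t} fills $[\pi,2\pi)$ as $t$ ranges over $[0,\infty)$, one expects the sharper dichotomy that unbounded-above solutions have diameter at least $2\pi$, cleanly separating them from the bounded-above family; proving the strict inequality $>\pi$ would already suffice.

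First I would dispose of the solutions lying in the class $N$. Since a solution is bounded from above exactly when it lies in $N(-2)\cup N(0)$, an unbounded-above solution in $N$ lies in some $N(k)$ with $2k$ a positive integer. For these, Theorem \ref{e1} expresses the developing map as a ratio $w_1/w_2$ of solutions of the ODE \eqref{ODE} with $\deg P=d=2k\ge 1$, and supplies the asymptotic profile \eqref{asymptotics}. The crucial structural consequence is that the plane splits into $d+2\ge 3$ Stokes sectors carrying \emph{distinct} asymptotic values $a_1,\dots,a_{d+2}$ on the sphere, while along the Stokes lines $u\sim k\log|z|\to+\infty$. I would exploit this sheeted structure to select two ``ends'' whose asymptotic values are close on the sphere, and argue that any curve in $\mathbb{R}^2$ joining points deep in these two sectors has spherical image length exceeding $\pi$: because $f$ is a local isometry with at least three distinct asymptotic values, monodromy forces a connecting curve to wind around the sphere rather than follow the short geodesic between the corresponding points, yielding the strict excess over the minimal value $\pi$ established in \cite{GL}.

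Next I would treat solutions outside $N$, namely those with $\limsup_{z\to\infty}u(z)/\log|z|=+\infty$. Here the preliminary task is to show that a conformal diameter close to the minimal value $\pi$ already forces $u\in N$. The natural route is to bound the average covering number $A(r,f)$, and thereby the growth of $u$, in terms of the diameter, so that near-minimal diameter rules out superpolynomial growth of $e^{2u}$ and returns $u$ to class $N$, where the previous paragraph applies. This step is delicate because the metric is incomplete and there is no a priori comparison between the pointwise size of $u$ and the intrinsic diameter.

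The main obstacle, and the reason the statement remains conjectural, is precisely the geometric step for solutions in $N$ that are not bounded above: converting the \emph{local} asymptotic description of the developing map near each end into a \emph{global}, sharp lower bound on the intrinsic diameter. Standard Riemannian comparison is unavailable, since $e^{2u}\delta$ is an incomplete metric of curvature $1$ realized through an infinite-degree, locally isometric developing map $f:\mathbb{C}\to\bar{\mathbb{C}}$. Consequently the diameter-realizing geodesics must be controlled by a careful topological and monodromy analysis of this covering rather than by curvature estimates, and making that analysis quantitative enough to produce the strict inequality $>\pi$ is the crux of the difficulty.
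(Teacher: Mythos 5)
You have not produced a proof, and indeed no proof exists in the paper: the statement you address is the first Conjecture of Section 5, which the authors explicitly leave open (``Questions \ref{q2}--\ref{q3} remain open in their full generality''). What the paper actually proves --- Theorem \ref{answer}(4), via the last corollary to Theorem \ref{upb} --- is exactly the special case to which you reduce everything: for solutions \emph{bounded from above}, the diameter lies in $[\pi,2\pi)$ and equals $\pi$ precisely for radial and one-dimensional solutions. Your reduction of the conjecture to the single implication ``$\operatorname{diam}_{e^{2u}\delta}(\mathbb{R}^2)=\pi \Rightarrow u$ bounded above'' is correct (boundedness from above is equivalent to $u\in N(-2)\cup N(0)$ by Theorems \ref{e1} and \ref{upb}), and it is the natural framing of the missing content; but the two steps that would establish this implication are only sketched, and you acknowledge as much.

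Concretely, the gaps are these. First, for $u\in N(k)$ with $2k\ge 1$, the only estimate that comes for free from the developing map is that the intrinsic length of a curve $\gamma$ equals the spherical length of $f\circ\gamma$, which is bounded below by the spherical distance between $f$ of its endpoints --- a quantity that never exceeds $\pi$. To obtain the strict inequality $\operatorname{diam}>\pi$ you must show that \emph{every} curve joining two suitably chosen ends has image forced to wind on the sphere; ``monodromy forces winding'' is an assertion, not an argument, and it must in particular survive the fact that in each Stokes sector $u\to-\infty$ exponentially fast by \eqref{asymptotics}, so the metric is nearly degenerate there and curves can traverse such regions at negligible cost. Making this topological analysis of the infinite-degree local isometry $f:\mathbb{C}\to\bar{\mathbb{C}}$ quantitative is precisely the open crux, as you say. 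Second, for solutions outside $N$ you invoke a bound of the form ``diameter near $\pi$ forces $u(z)\le O(\log|z|)$,'' i.e.\ an inequality relating the growth of $A(r,f)$ (equivalently of $u$) to the intrinsic diameter; no such inequality is proved in the paper or in your proposal, and the incompleteness of the metric is exactly what blocks the standard comparison arguments. Until both steps are supplied, what you have written is a reasonable research program --- essentially the one the authors implicitly suggest by stating this as a conjecture --- rather than a proof.
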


\begin{conjecture}
Let $u$ be a solution to  Liouville equation \eqref{liouvilleequation}. If $u$ is concave, then $u$ is one-dimensional.
\end{conjecture}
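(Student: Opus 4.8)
The plan is to prove the statement by combining two ingredients that are already available: the nonexistence result for strictly concave solutions, and the constant rank theorem applied to the convex function $v:=-u$. Throughout I normalize so that the local maximum sits at the origin; since $u$ is concave, this local maximum is automatically a global maximum, so $M:=u(0)=\sup_{\mathbb{R}^n}u<\infty$ and in particular $u$ is bounded above. The whole argument reduces to showing that the Hessian of $u$ cannot have full rank in any dimension $\ge 2$, and then reading off that its actual rank must be exactly $1$.

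First I would isolate the following claim: \emph{for every $m\ge 2$ there is no strictly concave solution of $-\Delta U=e^{2U}$ in $\mathbb{R}^m$ possessing a local maximum.} Fix such a $U$ with maximum at the origin; by concavity this is a global maximum, so $U$ is bounded above. For $m=2$ this already contradicts Theorem \ref{upb}: up to the transformations of Remark \ref{ts}, every bounded-above planar solution is either the radial solution \eqref{cr} or a member of the family \eqref{u_t}, and none of these is strictly concave (the radial one loses concavity for large $|z|$, while the only concave member of \eqref{u_t} is the one-dimensional $\ln\sech$, whose Hessian degenerates in the transverse direction). For $m\ge 3$ I would instead use strictness quantitatively: strict concavity together with the interior maximum forces $\sup_{|z|=1}\bigl(U(z)-U(0)\bigr)\le -a$ for some $a>0$, and concavity along rays upgrades this to the linear decay bound
\begin{align*}
U(z)\le -a\,|z|+U(0),\qquad |z|\ge 1,
\end{align*}
so that $e^{2U}$ is exponentially small at infinity and $\int_{\mathbb{R}^m}e^{2U}\,dx<\infty$. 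This contradicts Lemma \ref{ne}, which proves the claim.

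Next I would pass to $v:=-u$, which is convex and solves $\Delta v=G(v)$ with $G(s)=e^{-2s}$. A direct check gives $GG''=4e^{-4s}<8e^{-4s}=2(G')^2$ for all $s$, so the structural hypothesis of the constant rank theorem of Korevaar--Lewis \cite[Theorem 2]{KL87} holds. Hence $D^2 v$, equivalently $D^2 u$, has constant rank $r$ throughout $\mathbb{R}^n$, and after an orthogonal change of coordinates $u$ depends only on $x_1,\dots,x_r$ and is constant in the remaining $n-r$ directions. A constant function cannot solve $-\Delta u=e^{2u}$, so $r\ge 1$. Writing $U(x_1,\dots,x_r):=u$, the reduced function solves the Liouville equation \eqref{nliouvilleequation} in $\mathbb{R}^r$, inherits a maximum at the origin, and is \emph{strictly} concave there because its Hessian has full rank $r$. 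By the claim above this is impossible when $r\ge 2$, so $r=1$; thus $u$ is one-dimensional, and by Proposition \ref{class1d} it equals $\ln(\sech x_n)$ up to the transformations of Remark \ref{ts}.

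The main obstacle is the nonexistence claim, and in particular its low-dimensional instances, since it is exactly the failure of Lemma \ref{ne} in dimensions $1$ and $2$ that prevents the reduction from collapsing past rank $1$. Two points require care. First, the passage from strict concavity to a genuinely \emph{linear} decay rate (rather than merely $U(z)<U(0)$) is what yields integrability of $e^{2u}$, and this must be extracted cleanly from concavity along rays. Second, the $m=2$ endpoint cannot be handled by the Newtonian-potential argument of Lemma \ref{ne} and instead rests on the complex-analytic classification of Theorem \ref{upb}; keeping the dimensional bookkeeping correct, so that the reduced problem always lands on a bona fide strictly concave solution of the lower-dimensional Liouville equation to which the claim applies, is where most of the attention is needed.
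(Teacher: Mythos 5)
The statement you set out to prove is one of the paper's \emph{open conjectures}, not one of its theorems: the authors say explicitly that Questions \ref{q2}--\ref{q3} ``remain open in their full generality,'' and this conjecture is precisely Question \ref{q2} with no boundedness or maximum hypothesis. Your argument does not close that gap, because its very first sentence smuggles the missing hypothesis back in. Writing ``I normalize so that the local maximum sits at the origin'' is not a normalization: concavity does not provide a local maximum to be placed anywhere. A concave function on $\mathbb{R}^n$ need not be bounded above (affine functions), and even a concave function bounded above need not attain its supremum (consider $-e^{x_1}$, which is concave, negative, and has no critical point; $-e^{x_1}-x_2^2$ is even strictly concave with no critical point). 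A translation can move an existing maximum to the origin; it cannot create one. With that assumption granted, what you have written is, essentially step for step, the paper's own proof of Theorem \ref{main}: strict concavity plus an interior maximum gives the linear decay $U(z)\le -a|z|+U(0)$, hence $\int e^{2U}\,dx<\infty$, contradicting Lemma \ref{ne} when $m\ge 3$ and the classification of Theorem \ref{upb} when $m=2$; then the Korevaar--Lewis constant rank theorem and dimension reduction force rank $1$. Theorem \ref{main} carries the hypothesis ``$u$ has a local maximum at some point $x_0$'' exactly because this argument cannot start without it.

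Every load-bearing step fails once that hypothesis is dropped. The decay estimate needs a critical point: for a strictly concave $U$ with no critical point, $U(z)-U(0)\le -a|z|$ is simply false, $e^{2U}$ need not be integrable, and Lemma \ref{ne} gives nothing. In the planar case (which is the actual setting of the conjecture, since \eqref{liouvilleequation} is an equation in $\mathbb{R}^2$), the appeal to Theorem \ref{upb} requires $u$ bounded above, which concavity alone does not supply. And in the dimension-reduction step, the reduced function on $\mathbb{R}^r$ ``inherits a maximum'' only if there was one to inherit. So the genuine content of the conjecture --- ruling out concave solutions that are unbounded above, or bounded above with supremum unattained --- is untouched. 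This is not a fixable slip in an otherwise complete proof; it is exactly the open part of the problem, which the paper itself flags by proposing (after Theorem \ref{sigman}) that one would need, e.g., to show $\det(D^2u)$ attains a local minimum for an arbitrary concave solution, ``However, whether the assumption holds still remains an open question.''
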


We remark that motivated by Question \ref{q2}, stronger constant rank theorems  have  been proved in \cite{LX2021}, which are stated as follows.

\begin{theorem}
\label{sigma2theorem}
Let $u$ be a convex solution to the semilinear elliptic equation $\Delta u=G(u)$ in  $\mathbb{R}^n$, with $G>0, G'<0$ and $GG^{''}<2(G')^2$. If $\sigma_2(D^2 u)$ has a local minimum in the interior, then level sets of $u$ are hyperplanes.
\end{theorem}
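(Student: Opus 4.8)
The plan is to read this as a microscopic convexity (constant rank) statement for the scalar $\sigma_2(D^2u)$, in the line of Caffarelli--Friedman, Korevaar--Lewis and Bian--Guan. The two facts I would exploit at the outset are algebraic: for a convex $u$ one has $\sigma_2(D^2u)=\sum_{i<j}\lambda_i\lambda_j\ge 0$ with equality exactly when $\mathrm{rank}\,D^2u\le 1$, and the identity $\sigma_2(D^2u)=\tfrac12\big((\Delta u)^2-|D^2u|^2\big)=\tfrac12\big(G(u)^2-|D^2u|^2\big)$ turns $\sigma_2$ into a quantity governed entirely by the equation $\Delta u=G(u)$. It is also worth recording that the hypothesis $GG''<2(G')^2$ is equivalent to convexity of $1/G$, which is the structural feature that drives constant-rank arguments. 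The goal is therefore to prove that $\sigma_2(D^2u)$ is constant and that this constant is forced to be $0$, so that $u$ has rank-one Hessian everywhere, hence is one-dimensional with flat level sets.

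The engine is a strong minimum principle. Differentiating $\Delta u=G(u)$ once and twice gives $\Delta u_k=G'(u)u_k$ and $\Delta u_{ij}=G''(u)u_iu_j+G'(u)u_{ij}$, and differentiating the identity above gives the first-order relation $\partial_k\sigma_2=G(u)G'(u)u_k-\sum_{i,j}u_{ij}u_{ijk}$, which lets weighted sums of third derivatives be traded for $\nabla\sigma_2$ plus lower-order terms. Rather than the bare Laplacian I would apply the linearized operator $L=\sum_{i,j}S^{ij}\partial_{ij}$ attached to $\sigma_2$ (its Newton/cofactor tensor), since this is what makes the lower-order forcing generated by $G$ cancel correctly. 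The aim is to establish, in a full neighborhood of a minimum point, a differential inequality of the form $L\sigma_2\le b\cdot\nabla\sigma_2+2G'(u)\,\sigma_2$; because $\sigma_2\ge 0$ and $G'<0$, after subtracting the minimum value $\sigma_2$ becomes a nonnegative supersolution of a pure second- and first-order elliptic operator, so the strong minimum principle forces $\sigma_2$ to be locally constant, and an open--closed connectedness argument propagates this to all of $\mathbb{R}^n$.

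Finally I would convert constancy into flatness through the equality case: once $\sigma_2$ is constant, every nonnegative quantity that was discarded in the differential inequality must vanish identically, and these vanishing third-order relations pin down $D^2u$ to have rank at most one; for a convex $u$ this means $u$ is a function of one linear variable, so its level sets are parallel hyperplanes. The step I expect to be the main obstacle is the derivation of the differential inequality, and within it two intertwined difficulties: choosing the linearized operator and performing the Cauchy--Schwarz/algebraic estimate in which the structural inequality $GG''<2(G')^2$ is actually consumed, and handling the set where $D^2u$ has repeated or vanishing eigenvalues. On that set $\sigma_2$ remains a smooth polynomial in the Hessian entries but the eigenframe does not, so both the estimate and the subsequent equality analysis must be carried out in the symmetric-function formulation (or via a perturbation/diagonalization valid on the minimal-eigenvalue locus) rather than by differentiating eigenvalues directly.
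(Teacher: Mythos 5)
First, a point of order: this paper never proves Theorem \ref{sigma2theorem}. It appears in Section 5 (``Further remarks'') purely as a statement quoted from the separate preprint \cite{LX2021}, cited to motivate the conjectures; no argument for it is given in this paper. So there is no ``paper's own proof'' to compare your proposal against, and I can only judge it on its own terms.

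Your architecture is the right one for this family of results (Caffarelli--Friedman, Korevaar--Lewis, Bian--Guan), and parts of it can even be streamlined: once you have an inequality of the form $L\sigma_2\le b\cdot\nabla\sigma_2+2G'(u)\sigma_2$ and conclude local constancy $\sigma_2\equiv m$ near the minimum point, you do not need a delicate equality-case analysis --- plugging constancy back into the inequality gives $0\le 2G'(u)\,m$, hence $m=0$ because $G'<0$ and $\sigma_2\ge 0$; then $D^2u$ has rank at most one on an open set, and the constant rank theorem of \cite{KL87} (which also provides the splitting into a one-dimensional profile; rank-one Hessian alone is not pointwise linear algebra) forces rank one globally and flat level sets. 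The genuine gap is that the entire content of the theorem is the differential inequality itself, which you postulate rather than derive, and it is not a formality. Using $\Delta u=G(u)$ one computes
\begin{equation*}
\Delta\sigma_2-2G'(u)\,\sigma_2=(G')^2|\nabla u|^2+G''\,\langle S\nabla u,\nabla u\rangle-|D^3u|^2,
\qquad S:=G(u)\,\mathrm{Id}-D^2u\succeq 0,
\end{equation*}
and the only obvious way to convert $|D^3u|^2$ into $\nabla\sigma_2$ information, namely Cauchy--Schwarz applied to the contraction $u_{ij}u_{ijk}=G G'u_k-\partial_k\sigma_2$, yields
\begin{equation*}
\Delta\sigma_2-2G'(u)\,\sigma_2\le
G''\,\langle S\nabla u,\nabla u\rangle-\frac{2\sigma_2(G')^2}{|D^2u|^2}|\nabla u|^2
+b\cdot\nabla\sigma_2 ,
\end{equation*}
with $b$ locally bounded. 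The leftover term is exactly where the structure must enter, and it does \emph{not} have a sign: even granting $GG''<2(G')^2$, the required pointwise bound reduces to $\langle S\nabla u,\nabla u\rangle\le G\sigma_2|\nabla u|^2/|D^2u|^2$, which for $n=2$ with eigenvalues $\lambda_1\ge\lambda_2\ge 0$ and $\nabla u$ in the worst direction reads $\lambda_1\le\lambda_2$ --- false except at umbilical points. This is precisely the dangerous regime (Hessian nearly rank one, $\sigma_2$ small but positive, gradient not aligned with the large eigenvector) that a local-minimum hypothesis cannot exclude. So the straightforward estimate fails, and nowhere in your sketch is the hypothesis $GG''<2(G')^2$ actually consumed: you correctly identify it as convexity of $1/G$, but no step uses it. Closing the gap requires the full eigenframe, third-derivative analysis exploiting simultaneously the two constraints $\sum_a u_{aak}=G'u_k$ and $u_{ab}u_{abk}=GG'u_k-\partial_k\sigma_2$ (the analogue of the Korevaar--Lewis/Bian--Guan computation), which is exactly the content of the cited preprint \cite{LX2021}. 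As it stands, what you have is a plausible strategy with a correct skeleton, not a proof.
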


\begin{theorem}
\label{sigman}
Let $u$ be a convex solution to $\Delta u=G(u)$ in  $\mathbb{R}^n$, with $G>0, G'<0$ and $GG^{''}\le \frac{n}{n-1}(G')^2$. Then if $det(D^2 u)$ has a local minimum, then $det(D^2 u) \equiv 0$.
\end{theorem}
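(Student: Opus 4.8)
The plan is to establish a microscopic convexity principle for the scalar quantity $\phi:=\det(D^2u)=\sigma_n(D^2u)$: I would derive a differential inequality for $\phi$ and then invoke a strong minimum principle of the type used in the constant rank theorems of Korevaar--Lewis \cite{KL87} and Bian--Guan \cite{BG}. Since $u$ is convex, $\phi\ge 0$. The argument is local, so I would work in a ball about a prescribed interior local minimum point $x_0$ of $\phi$; where $\phi>0$ the matrix $D^2u$ is positive definite and all computations are non-degenerate.

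First I would extract a differential identity. Differentiating $\Delta u=G(u)$ once and twice gives $\Delta u_k=G'(u)u_k$ and $\Delta u_{k\ell}=G''(u)u_ku_\ell+G'(u)u_{k\ell}$. Writing $W:=D^2u$, letting $W^{ij}$ be the cofactors of $W$ and $W^{ij,rs}$ the second derivatives of $\det$ in the entries of $W$, we have $\partial_\ell\phi=W^{ij}u_{ij\ell}$ and $\Delta\phi=W^{ij}\sum_m u_{ijmm}+Q$, where $Q:=\sum_m W^{ij,rs}u_{ijm}u_{rsm}$ is a quadratic form in the third derivatives of $u$. Using $\sum_m u_{ijmm}=\partial_{ij}G(u)=G''u_iu_j+G'u_{ij}$ and the Euler identity $W^{ij}u_{ij}=n\phi$ yields
\[
\Delta\phi = G''\,W^{ij}u_iu_j + nG'\,\phi + Q .
\]
The term $nG'\phi$ is favorable because $G'<0$, but both $Q$ (the Hessian of $\det$ is indefinite) and the $G''$ term must be controlled.

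Controlling them is where the hypothesis $GG''\le\frac{n}{n-1}(G')^2$ enters, and I expect this to be the main obstacle. The constraints obtained from the equation, namely $\sum_i u_{iij}=G'u_j$, together with $\partial_j\phi=W^{ik}u_{ikj}$, fix the trace part of the third derivatives; after diagonalizing $W$ at a point and splitting $Q$ into its diagonal--diagonal (positive) and off-diagonal (negative) contributions, a Cauchy--Schwarz estimate should absorb $G''W^{ij}u_iu_j$ together with the bad part of $Q$ and produce
\[
\Delta\phi \le b^i\phi_i + C\phi - P ,
\]
with $b^i,C$ locally bounded and $P\ge 0$ a nonnegative quadratic form in $D^3u$ that vanishes only when $D^3u$ degenerates. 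The delicate point is that $\frac{n}{n-1}$ is exactly the constant for which this rearrangement stays nonnegative; verifying this sharp algebraic inequality is the computational heart of the proof.

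Granting the inequality, I would conclude in two steps. First, the microscopic convexity method of \cite{KL87,BG}---a Hopf-type propagation argument in which the term $-P$ supplies the needed strictness---shows that the nonnegative $\phi$ cannot attain a \emph{positive} interior local minimum: such a minimum would force $\phi$ to be locally constant, hence $P\equiv 0$ and $D^3u\equiv 0$, so that $u$ is a quadratic polynomial and $G(u)\equiv\Delta u$ is constant, contradicting $G'<0$. Therefore the given local minimum has value $\phi(x_0)=0$. Second, $\phi\ge 0$ is a supersolution attaining the nonpositive interior minimum value $0$, so the strong minimum principle (Bony, for the possibly degenerate operator) together with the real-analyticity of $u$ forces $\phi\equiv 0$. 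Thus $\det(D^2u)\equiv 0$.
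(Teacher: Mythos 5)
You should know at the outset that this paper does not prove Theorem \ref{sigman} at all: the theorem is quoted in Section 5 from the companion preprint \cite{LX2021} and used there as a black box, so there is no in-paper proof to compare your argument with, and I can only judge the proposal on its own terms. Your opening identity $\Delta\phi = G''W^{ij}u_iu_j + nG'\phi + Q$ is correct, and the two-case skeleton (rule out a positive-value minimum, then treat a zero-value minimum by a constant-rank/strong-minimum-principle argument) is the natural one. The first genuine gap is that the one place where the hypothesis $GG''\le\frac{n}{n-1}(G')^2$ can enter --- absorbing $G''W^{ij}u_iu_j$ together with the positive part of $Q$ --- is precisely the step you leave undone, and it is not a routine Cauchy--Schwarz. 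In coordinates diagonalizing $D^2u$ (eigenvalues $\lambda_i>0$), with $\psi=\log\phi$, your identity becomes
\[
\Delta\psi \;=\; nG' \;+\; G''\sum_i\frac{u_i^2}{\lambda_i}\;-\;\sum_{i,j,m}\frac{u_{ijm}^2}{\lambda_i\lambda_j},
\]
and Cauchy--Schwarz against the differentiated equation $\sum_i u_{iij}=G'u_j$ alone (with weights $\lambda_i\lambda_j$ and $\sum_i\lambda_i=G$) bounds the third-derivative sum below only by $\frac{(G')^2}{G}\sum_j u_j^2/\lambda_j$; that yields the conclusion under $GG''\le (G')^2$, not under the stated $\frac{n}{n-1}(G')^2$, and the dominant-eigenvalue configuration shows this single-constraint estimate cannot be improved. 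To reach the sharp constant one must also use the $n$ extra linear constraints $\sum_i u_{iim}/\lambda_i=0$ furnished by $\nabla\phi(x_0)=0$ at the minimum point (equivalently, carry your $b^i\phi_i$ terms exactly); for $n=2$ this two-constraint minimization gives the lower bound $\frac{2G(G')^2u_j^2}{\lambda_j(\lambda_1-\lambda_2)^2}$ and hence exactly the condition $GG''\le 2(G')^2=\frac{n}{n-1}(G')^2$. So the ``computational heart'' you defer is where the whole theorem lives, and the estimate you gesture at is the wrong one.

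The second problem is a logical error, not an omission: your exclusion of a positive local minimum does not follow from the inequality you formulate. From $\Delta\phi\le b^i\phi_i+C\phi-P$ with $C$ merely locally bounded (sign unspecified), no minimum principle forces local constancy at a minimum with \emph{positive} value: $\phi(x)=\cosh x_1$ satisfies $\Delta\phi-\phi\le 0$ and has a strict positive interior minimum. The Hopf-type propagation in \cite{KL87,BG} is engineered for minima of value zero --- that is what constant rank means --- and does not transfer to positive minima. What actually kills positive minima is the sign you threw away when you swept $nG'\phi$ into ``$C\phi$'': once the absorption above is done correctly one has $\Delta\psi-b\cdot\nabla\psi\le nG'<0$ wherever $\phi>0$, and a positive-value interior local minimum is then impossible for the trivial reason that at such a point $\nabla\psi=0$ and $\Delta\psi\ge0$; no propagation argument or rigidity of quadratics is needed. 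Your final step for a zero-value minimum is fine in outline, but note that it requires the differential inequality with locally bounded coefficients across points where $D^2u$ degenerates (the real technical content of \cite{KL87}, which your ``where $\phi>0$ the computations are non-degenerate'' sidesteps); that invoking the Korevaar--Lewis theorem itself needs the strict inequality $GG''<2(G')^2$, which your hypothesis guarantees only for $n\ge3$; and that real-analyticity of $u$ is unavailable for $G$ merely $C^2$ --- an open-closed argument on the set $\{\phi=0\}$ replaces it.
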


In particular, if one can prove that   $det(D^2 u)$  attains  a local minimum for any concave solution $u$ to Liouville equation \eqref{nliouvilleequation} in arbitrary dimensions, then by Theorem \ref{sigman} and dimension reduction, $u$ must be  one-dimensional.  However,  whether the assumption holds  still remains  an open question.

\end{document}